\DeclareMathOperator{\spn}{span}
\newtheorem{theorem}{Theorem}[section]
\newtheorem{lemma}[theorem]{Lemma}
\newtheorem{corollary}[theorem]{Corollary}
\theoremstyle{definition}
\newtheorem{definition}[theorem]{Definition}
\theoremstyle{remark}
\newtheorem{remark}[theorem]{Remark}
\numberwithin{equation}{section}
\def\subg{{\nabla_{\mathbb{G}_{n+1}}}}
\def\subl{{\Delta_{\mathbb{G}_{n+1}}}}
\def\fr{{\frac{2n}{n+1}}}
\def\tn{{\tilde{N}}}
\def\aj{{ |x_1|^{\frac{n+1}{2}}+|x_2|^{\frac{n+1}{2}}+|x_j|^{\frac{n+1}{2(j-1)}}}}
\def\gp{{\mathbb{G}_{n+1}^{'}}}
\def\g{{\mathbb{G}_{n+1}}}
\def\one{\mbox{1\hspace{-4.25pt}\fontsize{12}{14.4}\selectfont\textrm{1}}}
\def\S{{\sum_{j=2}^{n}}}
\newcommand{\vertiii}[1]{{\left\vert\kern-0.25ex\left\vert\kern-0.25ex\left\vert #1 
    \right\vert\kern-0.25ex\right\vert\kern-0.25ex\right\vert}}
\begin{document}
	\setcounter{page}{1}
	
\title[$q$-Poincar{\'e} inequalities on Carnot Groups ]{$q$-Poincar{\'e} inequalities on Carnot Groups \\ with filiform type Lie algebra}
	
	\author[M. Chatzakou]{Marianna Chatzakou}
	\address{
		Marianna Chatzakou:
		\endgraf
		Department of Mathematics: Analysis Logic and Discrete Mathematics
		\endgraf
		Ghent University
		\endgraf
		Krijgslaan 281, Ghent, B 9000
        \endgraf
        Belgium
		\endgraf
		{\it E-mail address} {\rm marianna.chatzakou@ugent.be}
		}

	\author[S. Federico]{Serena Federico}
	\address{
		Serena Federico:
		\endgraf
			Department of Mathematics
			\endgraf
		University of Bologna
		\endgraf
		Piazza di Porta S. Donato 5, 40126, Bologna
		\endgraf
		Italy
		\endgraf
		{\it E-mail address} {\rm serena.federico2@unibo.it}}
		
		\author[B. Zegarlinski]{Boguslaw Zegarlinski}
	\address{
		Boguslaw Zegarlinski:
		\endgraf
		Université de Toulouse ; CNRS
		\endgraf
		UPS, F-31062 Toulouse Cedex 9
        \endgraf
        France
		\endgraf
		{\it E-mail address} {\rm b.zegarlinski@math.univ-toulouse.fr}
		}
		\endgraf

\thanks{Marianna Chatzakou was supported by the FWO Odysseus 1 grant G.0H94.18N: Analysis and Partial Differential Equations,
and by the Methusalem programme of the Ghent University Special Research Fund (BOF) (Grant
number 01M01021), and is a postdoctoral fellow of the Research Foundation – Flanders (FWO) under
the postdoctoral grant No 12B1223N. Serena Federico has received funding from the European Unions Horizon 2020 research and innovation programme under the Marie Sk{\l}odowska-Curie grant agreement No 838661}

	\begin{abstract} 
	In this paper we prove (global) $q$- Poincar{\'e} inequalities for probability measures on nilpotent Lie groups with filiform Lie algebra of any length. The probability measures under consideration have a density with respect to the Haar measure given as a function of a suitable homogeneous norm.
	\end{abstract} \maketitle
	
	\tableofcontents
	
	\section{Introduction}\label{Intro}
	The present paper is devoted to the investigation of global $q$-Poincaré inequalities on Carnot groups with a filiform  Lie algebra, where, in particular, a probability measure depending on a suitable homogeneous norm \footnote{The terminology homogeneous (with respect to the dilations) norm used in the setting of homogeneous groups, actually describes a quasi-norm.} on the group is used in place of the standard Haar measure on $G$. The class of Carnot groups treated in this paper will be called of {\it Engel} type, since, as we shall see below, they have the Engel group as prototype. An immediate consequence of these inequalities is given by the {\it spectral gap} for the Dirichlet operator defined by means of our probability measure.  
	
	\medspace
	
Coercive inequalities, including Poincaré inequalities, have a deep link with Gaussian bounds for the heat kernel. Classical bounds for the heat kernel in the nilpotent group setting, and its connection with Sobolev inequalities, were proved long time ago by Varopoulos, Saloff-Coste and Coulhon \cite{VSC92}  (see also references therein). More recently, gradient estimates for the heat kernel on the Heisenberg group have been achieved by Li in \cite{Li06}, who proved, as a consequence, the validity of coercive inequalities for the heat kernel semigroup. A simpler proof of the results in \cite{Li06} can be found in \cite{BBBC08}, where, once again thanks to the validity of gradient bounds, several functional inequalities for the heat kernel are derived. In the general setting of nilpotent Lie groups, gradient estimates for the heat kernel of a second order hypoelliptic operator can be found in \cite{M08}, were also the suitable connected functional inequalities are proved.

In the setting of stratified Lie groups, Poincar\'{e} inequalities on non-isotropic Sobolev spaces have been considered in the works by Lu et al. \cite{LLT15} \cite{CLW07}, \cite{Lu97}, \cite{Lu00}, \cite{LW00} and \cite{LW04}. Let us stress that all these results, which do not rely on the use of a probability measure, 
often provide $q-p$ type of Poincaré inequalities instead of $q-q$ type.

\medspace

A set of problems which has so far resisted a thorough exploration in this context is that of coercive inequalities involving sub-gradients and probability measures on nilpotent Lie groups.
In the present paper we shall focus on the latter problem, that is, more precisely, we will study global $q$-Poincaré inequalities by using a probability measure dictated by the group instead of the doubling property of the measure and heat kernel estimates.
This approach was recently developed by Hebisch and the third author in  \cite{HZ10} where they define a general strategy to study Poincar{\'e}, Log-Sobolev and other coercive inequalities on nilpotent Lie groups for a class of probability measures whose density (with respect to the Haar measure) is a function of the control (or Carnot-Carath\'{e}dory) distance. 
Specifically, the aforementioned technique is based on the use of quadratic form bounds providing a lower bound for a Dirichlet form involving the sub-gradient and a scalar potential, potential that is given as a function of the logarithm of the density of the probability measure. These bounds, called ``$U$-bounds'' in \cite{HZ10}, where $U$ stands for the potential, combined with the celebrated \textit{Poincar{\'e} inequality on balls} proved by Jerison in  \cite{Jer86} holding in the general nilpotent setting, allow to derive global  Poincar\'e inequalities on nilpotent Lie groups, provided, as we will see, that suitable gradient bounds for the potential are satisfied.

The quadratic form bounds in \cite{HZ10} are similar to the ones in the Euclidean setting that can be found in the works of Rosen \cite{Ros76} and Adams \cite{Ada79}. In \cite{HZ10} an application of this criterion is provided in the case of the Heisenberg group with probability measures depending on the control distance. In this setting, and for such measures, using the Poincar\'{e} and the Sobolev-Stein inequality (involving the sub-gradient and the Haar measure), also the Log-Sobolev inequality was proved. Additionally, it was shown that by replacing the control distance with any smooth distance (see \cite{HZ10}) the Log-Sobolev inequality fails to hold.

As pointed out in \cite{HZ10}, it is often convenient to use homogeneous norms different from the control distance. Indeed in \cite{Ing10} it was shown that in the case of the Heisenberg group, replacing the control distance with the Kaplan norm in the potential still allows for the Poincar{\'e} inequality to hold. Note also that the change of measure  also implies different spectral properties for the corresponding Dirichlet operators (see Remark 4.5.4 in \cite{Ing10}).
The difficulty in this consideration is that the quadratic form bounds do not provide a scalar potential growing to infinity in all directions, therefore an additional idea is necessary to treat the delicate region around the $Z$-axis where the sub-gradient of the Kaplan norm is small (unlike the sub-gradient of the control distance which satisfies the eikonal equation).  
 
Let us remark that the explicit knowledge of the Kaplan norm for $\mathbb{H}$-type groups is deeply used in \cite{Ing10}, while, in our case, due to the lack of such an explicit formula for the groups we will be considering (which are not of $\mathbb{H}$-type), 
 a different convenient homogeneous norm will be used.

	\medspace
	
	It is worth to mention that the global Poincar\'e inequalities studied here, have different applications with respect to their local counterparts. However, before stating more clearly the consequences these global inequalities have, let us first briefly recall some local results, and explain how they play an important role in our problem too.
	
	The first result concerning local Poincaré inequalities involving H\"{o}rmander's vector fields was proved by Jerison in \cite{Jer86}. Here the standard non-degenerate gradient is replaced by the possibly degenerate sub-gradient associated with the system of H\"{o}rmander's vector fields. Since we will be using the result in \cite{Jer86} later on, we state below Theorem 2.1 in \cite{Jer86}, which, in particular, holds in the general setting of nilpotent Lie groups.
\begin{theorem}
	\label{thm.jerison}
	Let $\mathbb G$ be any nilpotent Lie group, and let $r>0$, $x \in \mathbb G$. If $B_{r}(x)=\{y \in \mathbb G : d(x,y) \leq r \}$ is the ball of radius $r$ centered at $x$, then for all $p \in [1,\infty)$, there exists a constant $P_{0}(r)=P_{0}(r,p)$ such that for all $f \in C^{\infty}(B_r(x))$
	\[
	\int_{B_{r}(x)} |f(y)-f_{B_{r}(x)}|^{p}\,dy \leq P_{0}(r) \int_{B_{r}(x)} |\nabla_{\mathbb G}f(y)|^p\,dy\,,
	\]
	where $f_{B_{r}(x)}:= \frac{1}{|B_{r}(x)|}\int_{B_{r}(x)}f(y)\,dy$, and $dy$ denotes the Lebesgue measure.
\end{theorem}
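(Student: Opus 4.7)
The plan is to derive the inequality from a pointwise subrepresentation of Sobolev type, namely
\begin{equation*}
|f(x) - f_{B_r(x)}| \leq C \int_{B_{2r}(x)} \frac{d(x,y)}{|B(x,d(x,y))|}\,|\nabla_{\mathbb G} f(y)|\,dy,
\end{equation*}
valid for $x$ in a concentric sub-ball, and then to apply a Riesz-potential style $L^p$ bound in the doubling metric-measure space $(\mathbb G, d, dy)$. The doubling property $|B_{2s}(z)| \leq C\,|B_s(z)|$, valid on any nilpotent Lie group, follows from the Nagel--Stein--Wainger ball-volume estimates and is the analytic feature that makes such a Riesz-type bound effective.

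First I would address the geometric step. By Chow's theorem and left-invariance, any two points $x,y$ of a ball can be joined by a horizontal subunit curve of length comparable to $d(x,y)$. Following Jerison, one enriches a single connecting curve into a family $\gamma_{x,y}^{u}$ parametrized by an auxiliary variable $u$ in some neighbourhood built from iterated commutators of the Hörmander-generating vector fields, chosen so that the map $(u,t)\mapsto \gamma_{x,y}^u(t)$ has Jacobian bounded below. This transversality turns the naive length estimate into a usable ``fat tube'' covering of the ball.

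Next I would write the identity
\begin{equation*}
f(x) - f(y) = -\int_0^1 \frac{d}{dt}\, f\bigl(\gamma_{x,y}^u(t)\bigr)\, dt,
\end{equation*}
average in $y \in B_r(x)$ and in $u$, and apply Fubini. The left-invariance of $\nabla_{\mathbb G}$, the Nagel--Stein--Wainger volume estimates, and the Jacobian lower bound from the previous step combine to produce the subrepresentation displayed above, with constant independent of $f$ and $r$ (up to the correct scaling). Once this is in hand, Hedberg's truncation trick in the doubling space yields $L^p$-boundedness of the operator on the right-hand side, and raising both sides to the $p$-th power and integrating deliver the Poincaré inequality with $P_0(r)$ scaling like $r^p$.

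The principal obstacle is precisely the construction of the transversal family of curves and the verification of the Jacobian lower bound: these are the content of Jerison's geometric lemmas and require an explicit perturbation of the connecting path built from iterated brackets, where the full strength of the Hörmander condition (not merely connectivity) enters. Without the Jacobian estimate one would only obtain a weaker $(p,q)$-Poincaré inequality with $q<p$, or would need to chain several balls, producing non-sharp dependence on $r$ and spoiling the clean scaling needed later in the paper.
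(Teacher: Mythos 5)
The paper does not prove this theorem: it is quoted verbatim as Theorem 2.1 of Jerison's 1986 Duke paper \cite{Jer86} and used as a black box, so there is no in-paper proof to compare against. Your sketch is a plausible high-level reconstruction of how one establishes the $(p,p)$-Poincar\'e inequality for H\"ormander systems, and you correctly identify that the crux of Jerison's argument is the construction of a transversal family of horizontal curves with a uniform Jacobian lower bound (his ``fat tube'' lemma), which is what upgrades the naive one-curve length estimate to a genuine subrepresentation formula. That said, your proposed route blends Jerison's geometric core with post-1986 machinery (formal Riesz-potential boundedness on doubling metric measure spaces, Hedberg truncation) that is more in the spirit of Franchi--Lu--Wheeden and Haj\l{}asz--Koskela than of Jerison's original proof, which is organized differently and works more directly with the nilpotent group structure. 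Two concrete points are worth flagging. First, Hedberg's truncation trick is not needed here: for a $(p,p)$-Poincar\'e on a fixed ball one only needs that the truncated Riesz kernel $K(x,y)=d(x,y)/|B(x,d(x,y))|$ restricted to $B_{2r}$ has $\int_{B_{2r}}K(x,y)\,dx\leq Cr$ uniformly in $y$ (which follows from doubling), after which Minkowski's or Schur's test closes the $L^p\to L^p$ bound for all $p\in[1,\infty)$, including $p=1$; Hedberg's argument is the tool for the gain-of-integrability Sobolev embeddings, not for this endpoint. Second, your subrepresentation formula is stated ``for $x$ in a concentric sub-ball,'' so to conclude you would still need a standard chaining or covering step to pass from the sub-ball to $B_r$ itself; omitting this leaves a small gap. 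Neither issue is fatal, and the geometric lemma you flag as the ``principal obstacle'' is indeed where all the real work lies; as a proof \emph{of record} the sketch is incomplete precisely there, but as an account of the strategy it is sound.
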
 

We remark that several other local results have been proved for H\"ormander's system of vector fields. 
Let us mention some of them, like, for instance, the ones by Franchi, Lu and Wheeden in \cite{FLW96}, and the ones by Garofalo and Nhieu in \cite{GN96}, both dealing with local $1$-Poincar\'e inequalities, giving, by duality, local Poincar\'e inequalities of  type $p-q$. Both considerations rely on the use of the doubling property of the measure and that they have applications in the resolution of the isoperimetric problem.

Other results in the nilpotent group setting are given by Ruzhansky and Surugan  in \cite{RS19} where different versions of local Poincar{\'e} inequalities are considered. 

Going back to Theorem \ref{thm.jerison}, we stress that this result is crucial in our global case too, since, specifically, we use a sort of localization technique where the local inequality is needed to control a suitable quantity in a bounded region.

\medspace

In what follows we will briefly describe our setting and state the main result of this paper.

The groups treated in this paper are of the form $\mathbb{G}_{n+1}=(\mathbb{R}^{n+1},\circ)$, $n\geq 3$,  where $\circ$ stands for the composition law
\begin{equation*}
x \circ y = 
\begin{pmatrix}
x_1+y_1 \\
x_2+y_2 \\
x_3+y_3+y_2x_1\\
x_4+y_4+y_3x_1+y_2\frac{x_{1}^{2}}{2!}\\
x_5+y_5+y_4x_1+y_3\frac{x_{1}^{2}}{2!}+x_2 \frac{x_{1}^{3}}{3!}\\
\vdots\\
x_{n+1}+y_{n+1}+y_nx_1+y_{n-1}\frac{x_{1}^{2}}{2!}+\cdots+y_3 \frac{x_{1}^{n-2}}{(n-2)!}+y_2\frac{x_{1}^{n-1}}{(n-1)!}\,
\end{pmatrix}\,.
\end{equation*}
These groups are stratified Lie groups with a filiform Lie algebra, and, observe, in the case $n=3$ we recover the so called {\it Engel} group.
We now consider the homogeneous (quasi-) norm $\tilde{N}$ on $\mathbb{G}_{n+1}$
\begin{equation*}
\tn(x):= \left(\|x\|^n+|x_{n+1}|\right)^{\frac{1}{n}}\,,
\end{equation*}
where $\|x\|^n:=\sum_{j=2}^{n}\left( \aj\right)^{\fr}$, and define on $\mathbb{G}_{n+1}$ the probability measure 
\begin{equation}
\label{prob.measure.carnot2}
 \mu_p(dx):=\frac{e^{-a\tn^p(x)}}{Z}dx,
\end{equation}
where $Z$ is a normalization constant.

The difficulty when dealing with general groups with our approach is to find an explicit homogeneous norm satisfying 
suitable bounds; see Lemma \ref{lemma.N,carnot} and the proof of Theorem \ref{Theorem 4.2} for the specific bounds needed. 
The necessity of these bounds motivates the choice of our setting, namely the one of Carnot groups with a filiform Lie algebra.
One might conjecture that the Kaplan norm could be such a choice, however its explicit formula is not at our disposal for groups other than the H-type groups. Under these motivating aspects, we chose to work in the setting of Carnot groups with a filiform Lie algebra, where the composition's law- and thus the associated vector fields'- formula allows for a quite natural choice of a homogeneous norm with the desired properties. 

Of course the same approach could be followed to prove the validity of $q$-global Poincar\'e inequalities in other more complicated settings, where, certainly, finding such a norm will (if it exists) be even more challenging.

With the previous definitions in mind we can now state the main result of the paper.

	\newtheorem*{thm.carnot}{Theorem \ref{thm.carnot}}
	\begin{thm.carnot}
	Let $\g$, $n \geq 3$ be a Carnot group of $n$-step. If $p \geq n$, then the measure $\mu_{p}$ as in \eqref{prob.measure.carnot2} satisfies a $q$-Poincar{\'e} inequality, i.e., there exists a constant $c_0$ such that 
	\begin{equation}
	\label{thm.carnot.stat2}
	\mu_{p} (|f-\mu_{p}(f)|^{q}) \leq c_0 \mu_{p}(|\subg f|^{q})\,,
	\end{equation}
	where $\frac{1}{p}+\frac{1}{q}=1$, for all functions $f$ for which the right hand side is finite.
	\end{thm.carnot}
	From Theorem \ref{thm.carnot} we can claim that \eqref{thm.carnot.stat2}, often called $q$-{\it spectral gap}, holds for  $q \leq 3/2$. However,  by Proposition 2.3 in \cite{BZ05}, we immediately get that if \eqref{thm.carnot.stat2} is true for $q$ then it is true for any $q'>q$ (and with the same unchanged measure $\mu_p$).
	Therefore from our result we obtain a 2-spectral gap (simply called {\it spectral gap}) for the Dirichlet operator defined with our probability measure, that is $\mathcal{L}=-\Delta_{\mathbb{G}}+\nabla_{\mathbb{G}} U\cdot \nabla_{\mathbb{G}}$ with $U$ being the probability density, and, consequently, we also get an exponential convergence to equilibrium in $L^2$ for the corresponding semigroup. 
	
	The spectral gap inequality is extremely important in order to derive information about the essential spectrum of the corresponding selfadjoint operator (see, for instance, \cite{GW06} and references therein).
	In particular, when a global $q$-Poincaré inequality of the form \eqref{thm.carnot.stat2} is true for $p\in (1,2)$, then no  spectral gap is valid and the operator defined by means of the homogeneous norm has no empty essential spectrum.

	\medspace
	
	We now conclude this introduction by giving the plan of the paper. 
	
	In Section 2 we provide a brief description of basics of analysis on groups of interest to us and make our setting more clear. 
	In Section 3 we prove global $q$- Poincar{\'e} inequalities on the  Engel group (Theorem \ref{thm.engel}) which represents the prototype for the more general case studied in Section 4. The strategy we use here extends the idea first used in \cite{Ing10} for  $\mathbb{H}$-type groups where a density depending on a homogeneous norm different than the control distance is used. This section also contains a lemma to get $q$-Poincarè inequalities for suitable measures different than $\mu_p$.
	We remark again that is not in general possible to pass from a $q$-spectral gap involving $\mu_p$ to the same gap involving a measure equivalent to $\mu_p$.
	Finally, the results of Section 3 are further generalised to nilpotent Lie groups with filiform Lie algebra of any length in Section 4.

	\section{Preliminaries on Carnot Groups}\label{Carnot}
	\textit{Carnot} groups are special cases of Carnot-Carath{\'e}odory spaces associated with a system of vector fields. In particular they are geodesic metric spaces initially introduced by Carath\'{e}odory in \cite{Car09} as a mathematical model of thermodynamics.\\
	\indent The setting of Carnot groups has many similarities with the Euclidean case (such as the geodesic distance, the presence of dilations and translations and the fact that they can naturally be equipped with an invariant measure called the Haar measure). Carnot groups become, therefore, highly interesting in many mathematical contexts. In particular Carnot groups appear mostly in harmonic analysis, in the study of hypoelliptic differential operators (cf. \cite{Ste93}, \cite{CDPT07}), as well as in the study of geometric measure theory (cf. \cite{Pan82}, \cite{Pan89}, \cite{Jer86}, \cite{LD13}, \cite{CL14}).\\
	
	\indent As for their geometric consideration, let us note that Carnot groups (or more generally stratified Lie groups) appear naturally in sub-Riemannian geometry (also called ``Carnot'' geometry). Roughly speaking Carnot groups can be served as the analogous of sub-Riemannian manifolds of the Euclidean vector spaces for Riemannian manifolds. More accurately, the tangent space at a point of a sub-Riemannian manifold can naturally be identified with a structure of a Carnot group (cf. \cite{Mit85}, \cite{BR96}). A direct approach to homogeneous Carnot groups can be found in \cite{Ste81}, \cite{VSC92}; see also \cite{HK00}.\\
	 
	 \indent Any Carnot group is naturally isomorphic to a homogeneous Lie group on $\mathbb{R}^n$ (see, for instance, \cite{BLU07}), i.e., Carnot groups can be realised as Lie groups with a global chart. Formally, they are defined as follows.
	 \begin{definition}\label{defn.Car.hom}
	 Let $\mathbb{G}=(\mathbb{R}^n, \circ)$ be a Lie group on $\mathbb{R}^n$ and let $\mathfrak{g}$ be the Lie algebra of $\mathbb{G}$. Then $\mathbb{G}$ is called a stratified group, or \textit{Carnot group}, if $\mathfrak{g}$ admits a vector space decomposition (stratification) of the form
	 \begin{equation}
	 \label{def.carnot}
	 \mathfrak{g}=\bigoplus_{j=1}^{r} V_j\,,\quad \text{such that}\quad 
	 \left\{
	 \begin{array}{l}
	 [V_1,V_{i-1}]= V_{i}\,,
	 \quad 2\leq i\leq r,\\
	 
	 [V_1,V_r]=\{0\},
	 \end{array}
	 \right.
	 \end{equation}
	 with $[V_i,V_j]$ denoting the Lie bracket of two arbitrary elements of the vector spaces $V_i$ and $V_j$.
	 \end{definition}

	 \begin{remark}
	As follows from Definition \ref{defn.Car.hom} a Carnot  group is nothing else than a Lie group whose Lie algebra $\mathfrak{g}$ is stratified (condition \eqref{def.carnot}).
	Any Carnot group admits at least one stratification, however, Definition \ref{def.carnot} is well posed since it does not depend on the  choice of the stratification (see, for instance, [Proposition 2.2.8 \cite{BLU07}]).
	Given a stratification $\mathfrak{g}=\oplus_{j=1}^{r}V_j$, where each $V_j$ consists of $n_j \neq 0$ elements of $\mathfrak{g}$, we write $x \in \mathbb{G}$ as
	 \[
	 x=(x^{(n_1)},\cdots,x^{(n_{j_r})})\,, \quad \text{where}\quad x^{(n_j)}\in \mathbb{R}^{n_j}\,,
	 \]
	 and the mapping $\delta_{\lambda}:\mathbb{R}^n \rightarrow \mathbb{R}^n$, $\lambda>0$, defined by
	 \[
	 \delta_{\lambda}(x):=(\lambda x^{(n_1)},\cdots,\lambda^{j_r}x^{(n_{j_{0})}})\,,
	 \]
	 is an automorphism of $\mathbb{G}$ for every $\lambda>0$;
see e.g. [Section 3.1.2 \cite{FR16}].
	 \end{remark}
	 There are several equivalent definitions of a Lie algebra $\mathfrak{g}$. Next we provide a characterisation of $\mathfrak{g}$ in the spirit of Definition \ref{defn.gen.op.subl-g}.\\
	 \indent Recall that a (smooth) vector field $X$ belongs to $\mathfrak{g}$ if and only if
	 \begin{equation}
	     \label{left-inv.v.f}
	     (XI)(\tau_{\alpha}(x))=\mathcal{J}_{\tau_{\alpha}(x)}(x) \cdot (XI)(x)\,,\quad \text{for all}\quad x \in \mathbb{G}\,,
	 \end{equation}
	 where $I$ stands for the identity map on $\mathbb{R}^n$, $(XI)(x)=(a_1(x),\ldots a_n(x))^t$ is the column vector of the components of $X$ at $x$,  and $\mathcal{J}_{\tau_{\alpha}(x)}$ denotes the Jacobian matrix at the point $x$ of the left-translation map $\tau_{\alpha}(x):= \alpha \circ x$, for some $\alpha \in \mathbb{G}$. In this case we say that $X$ is  a \textit{left-invariant vector field}.
	  In a similar way one can define the Lie algebra of a Lie group $\mathbb{G}$ in terms of right-invariant vector fields. In order to distinguish the Lie algebra generated by left-invariant vector fields from that generated by the right-invariant ones, we shall denote the latter by $\tilde{\mathfrak{g}}$.
	 For completeness we recall that a vector field $X\in T_e\mathbb{G}$, with $T_e\mathbb{G}$ being the tangent space of $\mathbb{G}$ at the neutral element $e\in \mathbb{G}$, is said to be  a \textit{right-invariant vector field} if it satisfies \eqref{left-inv.v.f} for $\tau_{\alpha}=\tilde{\tau}_{\alpha}:= x \circ \alpha$ (the right-translation map).\\
	 \indent The vector fields satisfying \eqref{left-inv.v.f} for $\alpha=0$, where $0$ is the identity element of $\mathbb{G}$, will be called the \textit{canonical left-invariant vector fields} in $\mathfrak{g}$. The \textit{canonical right-invariant vector fields} in $\tilde{\mathfrak{g}}$ are defined accordingly.
	 \begin{definition}
	 Let $\mathbb{G}$ be a stratified group. Let $(V_1,\ldots, V_r)$ be any stratification of the algebra of $\mathbb{G}$ as in Definition \ref{def.carnot}. Then we say that $\mathbb{G}$ has step (of nilpotency) $r$ and has $n_1$ generators, where $n_1:=\mathrm{dim}(V_1)$.
	 \end{definition}
	 
	 \begin{definition}
	 \label{defn.gen.op.subl-g}
	 Let $\mathbb{G}$ be a Carnot group, and let $\mathfrak{g}\,  (\tilde{\mathfrak{g}})$ be the corresponding Lie algebra. If $X_j$, $1 \leq j \leq n_1$, are the canonical left (right) invariant vector fields that generate $\mathfrak{g}\,  (\tilde{\mathfrak{g}})$, then the second order differential operator 
	 $$\Delta_{\mathbb{G}}=\sum_{j=1}^{n_1}X_{j}^{2}\,,
	 $$
 is called the \textit{canonical left (right) invariant sub-Laplacian} on $\mathbb{G}$, while the vector valued operator 
 $$\nabla_{\mathbb{G}}=(X_1,\cdots,X_{n_1})\,,$$
 is called the \textit{canonical left (right) invariant $\mathbb{G}$-gradient}. 
	 \end{definition}
	 Note that we have used the same notations for the left and the right-invariant sub-Laplacian and $\mathbb{G}$-gradient in Definition \ref{defn.gen.op.subl-g}. In order to avoid any confusion, we will always specify whether we are considering left or right-invariant vector fields.
	 
	 Below we give the explicit description of the Carnot groups we will be dealing with. As in [Section 4 \cite{BLU07}] we shall denote these Carnot groups of $n$-step by $\mathbb{G}_{n+1}=(\mathbb{R}^{n+1},\circ)$,  and by $\mathfrak{g}_{n+1}$ the corresponding Lie algebra of left-invariant vector fields. \\
	 
	 \indent Let $n \in \mathbb{N}$ be fixed. Let us consider the Lie algebra $\mathfrak{g}_{n+1}=\spn\{X_1,X_2,\cdots,X_{n+1}\}$
	 with commutator relations
	\begin{align*}
[X_i,X_j]&=0\,,\quad \quad\quad \quad 2\leq i,j \leq n+1\\
[X_1,X_j]&=X_{j+1}\,,\quad \quad 2 \leq j \leq n\\
[X_1,X_{n+1}]&=0\,.
\end{align*}
Then $\mathfrak{g}_{n+1}$ is an $(n+1)$-dimensional Lie algebra nilpotent of step $n$ that can be stratified as 
\[
\mathfrak{g}_{n+1}=\spn\{X_1,X_2\}\,\oplus\, \spn\{X_3\}\, \oplus\, \spn\{X_4\} \,\oplus\, \cdots \,\oplus\, \spn\{X_{n+1}\}\,.
\]
Observe that $\mathfrak{g}_{n+1}$ is stratified, implying that $\mathbb{G}_{n+1}$ is in particular a Carnot group. It is a routine to prove that the following (canonical, left-invariant) vector fields 
\[
X_1=\partial_{x_1}\,,\quad X_j= \sum_{k=j}^{n+1}\frac{x_{1}^{k-j}}{(k-j)!}\partial_{x_{k}}\,, \quad j=2,\cdots, n+1\,,
\]
satisfy the given commutator relations, and that, for any $\lambda>0$, the mapping
\[
\delta_{\lambda}(x_1,x_2,x_3,\cdots,x_{n+1})=(\lambda x_1,\lambda x_2, \lambda^2 x_3,\cdots, \lambda^n x_{n+1})\,,
\]
is an automorphism of $\mathbb{G}_{n+1}$. Finally we equip $\mathbb{G}_{n+1}=(\mathbb{R}^{n+1},\circ)$ with the composition law
\begin{equation*}
x \circ y = 
\begin{pmatrix}
x_1+y_1 \\
x_2+y_2 \\
x_3+y_3+y_2x_1\\
x_4+y_4+y_3x_1+y_2\frac{x_{1}^{2}}{2!}\\
x_5+y_5+y_4x_1+y_3\frac{x_{1}^{2}}{2!}+y_2 \frac{x_{1}^{3}}{3!}\\
\vdots\\
x_{n+1}+y_{n+1}+y_nx_1+y_{n-1}\frac{x_{1}^{2}}{2!}+\cdots+y_3 \frac{x_{1}^{n-2}}{(n-2)!}+y_2\frac{x_{1}^{n-1}}{(n-1)!}\,
\end{pmatrix}\,.
\end{equation*}
In what follows we shall consider groups of the form $\mathbb{G}_{n+1}$ for any $n\geq 3$ ($n=3$ and $n\geq 3$ in Section 3 and 4 respectively).
Note that, since in our case the topological dimension $\mathrm{dim}(\mathbb{G}_{n+1})=n+1\geq 4$ and the number of generators is $n$, i.e. $\mathrm{dim}(V_1)=n$, we have that $\mathbb{G}_{n+1}$ has a filiform Lie algebra (see Proposition 4.3.3 in \cite{BLU07}).

Let us also remark that ``being a Carnot group'' is invariant under isomorphisms of Lie groups, see e.g. [Proposition 2.2.10 \cite{BLU07}]. 
Additionally, Carnot groups of the same step are not necessarily isomorphic. Indeed, in \cite{BGR10} the authors study the heat kernel of two non-isomorphic Carnot groups of $3$-step, the so-called \textit{Engel group} $\mathcal{B}_4=(\mathbb{R}^4,\circ)$, and the so-called \textit{Cartan group} $\mathcal{B}_5=(\mathbb{R}^5,\ast)$. The Engel group $\mathcal{B}_4$ is $\mathbb{G}_{4}$, while the Cartan group $\mathcal{B}_5$ is not isomorphic to any $\mathbb{G}_{n+1}$.\\

\indent Finally, let us recall the notion of a homogeneous norm on a Carnot group.
\begin{definition}\label{def.hom.n}
We call \textit{homogeneous (quasi-)norm} on (the Carnot group) $\mathbb{G}$, every continuous \footnote{With respect to the Euclidean topology} mapping $N: \mathbb{G} \rightarrow [0,\infty)$ such that $N(x)>0$ if and only if $x \neq 0$, and 
\[
N(\delta_{\lambda}(x))=\lambda N(x)\,,\quad \text{for every}\quad \lambda>0\,,\quad x \in \mathbb{G}\,.
\]
\end{definition}

\indent The existence of geodesics in the setting of a 
Carnot group $\mathbb{G}$ (or even on more general settings, cf. \cite{HK00}) is well-known. Therefore, the \textit{control}, or \textit{Carnot-Carath\'{e}odory distance} (related to the generators of $\mathfrak{g}$) $d$ is well-defined on $\mathbb{G} \times \mathbb{G}$ giving rise to the metric $d_0$ defined by
\[
d_0(x):=d(x,0)\,,\quad x \in \mathbb{G}\,,
\]
where $d_0$ is a homogeneous norm on $\mathbb{G}$ (see e.g. [Theorem 5.2.8 \cite{BLU07}]), often simply denoted by $d$.

\section{$q$-Poincar{\'e}  inequality on the Engel group} 
	In this section we prove the $q$-Poincar{\'e} inequality in the setting of the Engel group $\mathcal{B}_4$ equipped with a probability measure.
	As already mentioned in the Introduction, the Engel group serves as a prototype for the more general filiform Carnot groups $\mathbb{G}_{n+1}$ described in the previous section. Let us emphasize that we shall work with right-invariant vector fields in this section, while in the general case described by $\mathbb{G}_{n+1}$  we shall use left-invariant vector fields instead. The reason of this choice is that, at least for the Engel group, this will show that global Poincaré inequalities hold independently of the use of left or right-invariant $\mathbb{G}$-gradients.
	\\
	\indent We start by defining a homogeneous (with respect to the dilations of the group) norm, denoted later on by $N$, on the Engel group $\mathcal{B}_4$, and, subsequently, a probability measure with density (with respect to the Haar measure) $U=e^{-aN^p}$ on $\mathcal{B}_4$, where $a>0$. In particular, we define:
	\begin{equation}\label{defin.norm.en}
N(x)= \left( \|x\|^3+|x_4|\right)^{\frac{1}{3}}\,,
\end{equation}
	where, for $x \in \mathcal{B}_4$, we define $\|x\|:=(x_{1}^{2}+x_{2}^{2}+|x_3|)^{\frac{1}{2}}$.\\
	
	In order to prove that the measure $\nu_p$ with density $U$(up to a normalization constant) satisfies $q$-Poincar\'{e} inequalities, we will need to prove some bounds for the norm $N$ in \eqref{defin.norm.en}. 
	The following lemma describes the behaviour of the norm $N$ under the action of the operators $\nabla_{\mathcal{B}_4}$ and $\Delta_{\mathcal{B}_4}$, which are the canonical right-invariant $\mathcal{B}_4$-gradient and canonical right-invariant sub-Laplacian on $\mathcal{B}_4$, respectively.
	\begin{lemma}\label{lemmaforN,engel}
Let $N$ be the norm on $\mathcal{B}_4$ given in \eqref{defin.norm.en}. Then, $N$ is smooth on $\mathcal{B}_{4}^{'}:=\mathcal{B}_4 \setminus \{ \{x:x_3=0\} \cup \{x:x_4=0\}\}$, and, in particular, for $x \in \mathcal{B}_{4}^{'}$, $x \neq 0$, we have the estimates
\begin{equation}\label{estim.subg}
 |\nabla_{\mathcal{B}_4} N(x)| \leq c_1 \frac{\|x\|^2}{N^2(x)}\,,
\end{equation}
and
\begin{equation}\label{estim.subl}
\Delta_{\mathcal{B}_4}N(x) \leq c_2 \frac{\|x\|}{N^2(x)}\,,
\end{equation}
for some positive constants $c_1$ and $c_2$.
\end{lemma}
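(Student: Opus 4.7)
The plan is to write $N = A^{1/3}$ where $A(x) := \|x\|^3 + |x_4|$, and reduce the two desired estimates to corresponding (easier) bounds on $A$. On $\mathcal{B}_4'$ both $|x_3|$ and $|x_4|$ are smooth, so $\|x\|^3 = (x_1^2 + x_2^2 + |x_3|)^{3/2}$, $A$, and hence $N = A^{1/3}$ (using $A > 0$ off the origin) are all smooth there. For any first-order differential operator $V$ the chain rule gives
\begin{equation*}
V N = \frac{V A}{3 N^2}, \qquad V^2 N = \frac{V^2 A}{3 N^2} - \frac{2 (V A)^2}{9 N^5}.
\end{equation*}
Summing over $V$ ranging through the canonical right-invariant generators of the first stratum --- which one reads off from the composition law as $X_1 = \partial_{x_1} + x_2 \partial_{x_3} + x_3 \partial_{x_4}$ and $X_2 = \partial_{x_2}$ --- yields $|\nabla_{\mathcal{B}_4} N| = \frac{1}{3 N^2}|\nabla_{\mathcal{B}_4} A|$ and $\Delta_{\mathcal{B}_4} N \leq \frac{\Delta_{\mathcal{B}_4} A}{3 N^2}$, the inequality using that $-\frac{2}{9 N^5}|\nabla_{\mathcal{B}_4} A|^2 \leq 0$. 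Hence \eqref{estim.subg}--\eqref{estim.subl} reduce to $|\nabla_{\mathcal{B}_4} A| \leq C \|x\|^2$ and $\Delta_{\mathcal{B}_4} A \leq C \|x\|$.

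The first-order computations are immediate: $X_2 A = 3 \|x\| x_2$ and $X_1 A = 3 \|x\| x_1 + \tfrac{3}{2} \|x\| x_2\,\mathrm{sgn}(x_3) + x_3\,\mathrm{sgn}(x_4)$, each summand being controlled by $\|x\|^2$ via the elementary bounds $|x_j| \leq \|x\|$ ($j = 1, 2$) and $|x_3| \leq \|x\|^2$ built into the definition of $\|x\|$. The easy part of the sub-Laplacian is analogous: $X_2^2 A = 3\|x\| + 3 x_2^2/\|x\| \leq 6\|x\|$.

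The principal obstacle is the expansion of $X_1^2 A$. Squaring $X_1 = \partial_{x_1} + x_2 \partial_{x_3} + x_3 \partial_{x_4}$ produces six pure second-order terms plus an anomalous first-order remainder $x_2\,\partial_{x_4} A$ arising from $x_2\,\partial_{x_3}(x_3\,\partial_{x_4} A)$. On $\mathcal{B}_4'$ every contribution involving $\partial_{x_4}$ of $\|x\|^3$ or $\partial_{x_4}^2 |x_4|$ vanishes, so only $\partial_{x_1}^2 \|x\|^3$, $\partial_{x_1}\partial_{x_3}\|x\|^3$, $\partial_{x_3}^2 \|x\|^3 = \frac{3}{4\|x\|}$, and the anomalous $x_2\,\mathrm{sgn}(x_4)$ survive. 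Each surviving second partial of $\|x\|^3$ carries a factor $1/\|x\|$, but the quadratic coefficients $x_2^2$, $x_1 x_2$, \ldots{} produced by squaring $X_1$ exactly compensate: applying once more $|x_j|\leq \|x\|$ and $|x_3|\leq \|x\|^2$ bounds every summand by a constant multiple of $\|x\|$. Summing with $X_2^2 A$ and invoking the chain-rule identity above yields \eqref{estim.subl}. The bookkeeping --- tracking the six cross terms and the commutator-type remainder $x_2\,\partial_{x_4} A$ in $X_1^2$ --- is where care is needed; once written out, the size bounds on the coordinates dispatch every summand routinely.
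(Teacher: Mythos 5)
Your proof is correct and follows essentially the same route as the paper: compute the canonical right-invariant generators explicitly, apply them to the norm, and bound every term with the elementary inequalities $|x_1|,|x_2|\leq\|x\|$ and $|x_3|\leq\|x\|^2$. Your reduction to $A=N^3$ via the chain-rule identities $VN=\frac{VA}{3N^2}$ and $V^2N=\frac{V^2A}{3N^2}-\frac{2(VA)^2}{9N^5}$, together with discarding the manifestly nonpositive term, is a modest but genuine organizational improvement over the paper's direct attack on $N$; otherwise the two arguments are the same calculation. (One harmless discrepancy: you write $X_1=\partial_{x_1}+x_2\partial_{x_3}+x_3\partial_{x_4}$ while the paper records $X_1=\partial_{x_1}-x_2\partial_{x_3}-x_3\partial_{x_4}$; this is a sign-convention difference that does not affect any of the absolute-value bounds.)
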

We note that the expression \eqref{estim.subl} should be realised in the sense of distributions since the operation $\Delta_{\mathcal{B}_4}$ is not defined on the center of the group. In other words in Lemma \ref{lemmaforN,engel} we consider the restriction of $\Delta_{\mathcal{B}_4}$ on $\mathcal{B}'_4$, also denoted by $\Delta_{\mathcal{B}_4}|_{\mathcal{B}'_4}$. Since the latter is well defined in the usual sense, and since  $\Delta_{\mathcal{B}_4}\overset{\mathcal{D}'(\mathcal{B}'_4)}{=}\Delta_{\mathcal{B}_4}|_{\mathcal{B}'_4}$ (i.e. in the sense of distributions), it is enough for our purposes to consider $\Delta_{\mathcal{B}_4}|_{\mathcal{B}'_4}$. This will allow us to perform standard computations, that is with classical derivatives, and obtain our final result on $\mathcal{B}_4$ via an approximation argument.
\begin{proof}
The canonical right-invariant vector fields as calculated in \cite{C20} are given by 
\[
X_1=\partial_{x_1}-x_2\partial_{x_3}-x_3\partial_{x_4}\,,\quad \text{and}\quad X_j=\partial_{x_j}\,,j=2,3,4\,.
\]
Note also that
\[X_1(\|x\|^3+|x_4|)=\frac 3 2 \|x\|(2x_1+x_2sgn(x_3))-x_3 sgn(x_4),\]
and that
\[X_2(\|x\|^3+|x_4|)=3\|x\|x_2.\]
Hence for $x \in \mathcal{B}_{4}^{'}$, $x \neq 0$, we have
\[
X_1N(x)=\frac{1}{3N^2(x)}\left(\frac 3 2\|x\|(2x_1-sgn(x_3)x_2)-sgn(x_4)x_3\right)\,,\quad \text{and}\quad X_2N(x)=\frac{\|x\|x_2}{N^2(x)}\,.
\]
Therefore
\[
|X_1N(x)| \leq \frac{2\|x\|^2}{N^2(x)}\,,\quad \text{and}\quad |X_2N(x)| \leq \frac{\|x\|^2}{N^2(x)}\,,
\]
implying that 
\[
|\nabla_{\mathcal{B}_4}N(x)|^2=(X_1N(x))^2+(X_2N(x))^2 \leq  \frac{5\|x\|^4}{N^4(x)}\,.
\]
On the other hand $\Delta_{\mathcal{B}_4}$ can be estimated, on $\mathcal{B'}_4$, as
\begin{eqnarray*}\label{calc.delta}
\Delta_{\mathcal{B}_4}N(x)&=&(X_1)^2 N(x)+(X_2)^2 N(x)\nonumber\\
&=& - \frac{2}{9}\cdot\frac{ \left(3\cdot2^{-1}\|x\|\left(2x_1-x_2sgn(x_3)\right)-x_3 sgn(x_4)\right)^2+2(\|x\|x_2)^2}{N^5(x)}\\
&+&\frac{2^{-2}\|x\|^{-1}(2x_1-x_2sgn(x_3))^2+3x_2sgn(x_4)+2\|x\|+x_2^2\|x\|^{-1}}{N^2(x)}\\
&\leq& \frac{7\|x\|}{N^2(x)},
\end{eqnarray*}
which concludes the proof.

\end{proof}
Our choice of the norm $N$ allow us to equip the Engel group $\mathcal{B}_4$ with the probability measure 

\begin{equation}\label{prob.m.engel}
\nu_{p}(dx):= \frac{e^{-aN^{p}(x)}}{Z}dx\,,
\end{equation}
where $p \in (1,\infty)$, $a>0$, $dx$ is the Lebesgue measure on $\mathbb{R}^4$ \footnote{The Haar measure of a Carnot group $\mathbb{G}=(\mathbb{R}^n,\circ)$ coincides with the Lebesgue measure on $\mathbb{R}^n$.}, and $Z=\int e^{-a N^{p}(x)}\,dx$ is the normalisation constant. We then have the following result. 
\begin{theorem}\label{thm.engel} 
\label{Theorem 3.2.}
If $p \geq 3$, then the measure $\nu_{p}$ given by \eqref{prob.m.engel} satisfies the following $q$-Poincar{\'e} inequality, i.e., there exists a constant $c_0\in(0,\infty)$ such that 
\[
\nu_{p} (|f-\nu_{p}(f)|^{q} )\leq c_0 \nu_{p}(|\nabla_{\mathcal{B}_4} f|^{q})\,,
\]
 where $\frac{1}{p}+\frac{1}{q}=1$, for all functions $f$ for which the right hand side is finite.
\end{theorem}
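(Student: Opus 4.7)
The plan is to apply the Hebisch--Zegarlinski ``$U$-bound'' strategy originating in \cite{HZ10}. Writing $U:=aN^p$, so $d\nu_p=Z^{-1}e^{-U}dx$, one first derives, via integration by parts, a quadratic form bound
\[
\int|f|^q\,W\,d\nu_p\le C_1\!\int|\nabla_{\mathcal{B}_4} f|^q d\nu_p+C_2\!\int|f|^q d\nu_p,\qquad W:=N^p|\nabla_{\mathcal{B}_4} N|^2,
\]
which exploits the pointwise estimates on $\nabla_{\mathcal{B}_4} N$ and $\Delta_{\mathcal{B}_4} N$ from Lemma \ref{lemmaforN,engel}. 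This $U$-bound is then coupled with Jerison's local Poincar\'e inequality (Theorem \ref{thm.jerison}) to absorb the local remainder, yielding the global $q$-Poincar\'e.

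For the $U$-bound, I would start from the divergence identity $\int\nabla_{\mathcal{B}_4}\!\cdot\!(|f|^q N\,\nabla_{\mathcal{B}_4} N\,e^{-U})\,dx=0$, justified by the rapid decay of $e^{-U}$ and polynomial control of the remaining factors. Expanding the divergence and using $\nabla_{\mathcal{B}_4} e^{-U}=-apN^{p-1}\nabla_{\mathcal{B}_4} N\,e^{-U}$ gives
\[
ap\!\int\!|f|^q N^p|\nabla_{\mathcal{B}_4} N|^2 d\nu_p = q\!\int\!|f|^{q-1}\mathrm{sgn}(f)\,N\,\nabla_{\mathcal{B}_4} f\!\cdot\!\nabla_{\mathcal{B}_4} N\,d\nu_p + \!\int\!|f|^q\bigl(|\nabla_{\mathcal{B}_4} N|^2+N\Delta_{\mathcal{B}_4} N\bigr)d\nu_p.
\]
Young's inequality with conjugate exponents $(p,q)$ applied to the first term on the right produces a contribution $\varepsilon\!\int\!|f|^q N^p|\nabla_{\mathcal{B}_4} N|^p d\nu_p$; since $|\nabla_{\mathcal{B}_4} N|\le c_1\|x\|^2/N^2\le c_1$ (using $\|x\|\le N$) and $p\ge 2$, this is dominated by $\varepsilon c_1^{p-2}\!\int\!|f|^q N^p|\nabla_{\mathcal{B}_4} N|^2 d\nu_p$ and absorbs back to the left for $\varepsilon$ small. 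The correction $|\nabla_{\mathcal{B}_4} N|^2+N\Delta_{\mathcal{B}_4} N$ is uniformly bounded by Lemma \ref{lemmaforN,engel} and $\|x\|\le N$, which completes the $U$-bound.

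The weight $W=N^p|\nabla_{\mathcal{B}_4} N|^2$ is coercive, $W\gtrsim N^p$, outside the \emph{degeneracy tube} $T_\delta:=\{\|x\|<\delta N\}$ around the $x_4$-axis, because the explicit formulae for $X_1N,X_2N$ in the proof of Lemma \ref{lemmaforN,engel} yield a matching lower bound $|\nabla_{\mathcal{B}_4} N|^2\gtrsim\|x\|^4/N^4$ on $T_\delta^c$. Since $T_\delta$ is \emph{unbounded} (it contains the whole $x_4$-axis), direct appeal to local Poincar\'e on $T_\delta$ is not possible. I would handle $T_\delta$ by dyadic decomposition into shells $T_\delta\cap\{2^k\le N<2^{k+1}\}$, each of which sits inside a Carnot--Carath\'eodory ball of radius $\sim 2^k$ on which Theorem \ref{thm.jerison} applies; the Jerison constants scale polynomially in $2^k$ whereas the mass of $\nu_p$ on each shell decays like $e^{-a2^{kp}}$. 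Summing these estimates produces $\int_{T_\delta}|f-c|^q d\nu_p\lesssim\int|\nabla_{\mathcal{B}_4} f|^q d\nu_p$ for a suitable constant $c$. Replacing $f$ by $f-\nu_p(f)$ (the inequality is invariant under additive constants), and assembling the $U$-bound outside $T_\delta$ with the dyadic Jerison estimate inside $T_\delta$, concludes the proof.

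The main obstacle is precisely the unbounded degeneracy of $|\nabla_{\mathcal{B}_4} N|$ along the $x_4$-axis, which prevents the $U$-bound weight from being coercive outside a compact set; this is the exact obstruction flagged in the introduction. Bypassing it requires the dyadic tube argument, and the hypothesis $p\ge 3$ enters both at the Young absorption step (where $p\ge 2$ already suffices) and, more delicately, in matching the exponents in the dyadic sum so that the Jerison-constant growth is beaten by the Gaussian-type tail of $\nu_p$ on $T_\delta$.
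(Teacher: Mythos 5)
Your high-level strategy (a $U$-bound followed by Jerison's local Poincar\'e) is the same as the paper's, and your integration-by-parts derivation of the quadratic form bound is essentially sound. However, the proposal has two genuine gaps, both tied to how you treat the degeneracy of the weight.

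First, the claim that the weight $W=N^p|\nabla_{\mathcal{B}_4}N|^2$ is coercive, $W\gtrsim N^p$, on the complement of $T_\delta=\{\|x\|<\delta N\}$ is false. From the explicit formulae $X_2N=\|x\|x_2/N^2$ and $X_1N=\bigl(\tfrac32\|x\|(2x_1-x_2\,\mathrm{sgn}\,x_3)-x_3\,\mathrm{sgn}\,x_4\bigr)/(3N^2)$, one sees that $\nabla_{\mathcal{B}_4}N$ vanishes on the entire surface $\{x_2=0,\,3\|x\|x_1=\mathrm{sgn}(x_4)x_3\}$, which contains points with $\|x\|\sim N$ (for instance $x_2=0$, $x_1,x_3,x_4>0$ chosen so that $x_3=3\|x\|x_1$ and $|x_4|\ll\|x\|^3$). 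So the degeneracy set of $W$ is not contained in any tube of the form $\{\|x\|<\delta N\}$; the region you would have to handle ``by Jerison'' is substantially larger than you allow for.

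Second, even restricting attention to the tube you describe, the dyadic-shell argument does not close. For a shell $S_k=T_\delta\cap\{2^k\le N<2^{k+1}\}$ contained in a CC ball $B_k$ of radius $\sim 2^k$, Jerison's inequality is stated for Lebesgue measure, so converting both sides to $\nu_p$ produces a factor $e^{-a\inf_{S_k}N^p}\cdot e^{a\sup_{B_k}N^p}\gtrsim e^{a(2^p-1)2^{kp}}$, which grows super-exponentially in $k$, while $P_0(2^k)$ grows only like $2^{kq}$. The intuition that ``the mass of $\nu_p$ on each shell decays like $e^{-a2^{kp}}$'' does not help, because the obstruction is the oscillation of the density $e^{-aN^p}$ across a ball, not the total mass of the shell; the dyadic sum diverges. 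There is also the unaddressed issue of matching the ball averages $f_{B_k}$ to a single constant $c$ across shells.

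The paper's proof avoids all of this with a device you do not use: a translation trick. Its $U$-bound is stated with the stronger and cleaner weight $N^{p-3}|x_2|^3$ (obtained by pairing with $\frac{N^2}{\|x\|^2}\nabla_{\mathcal{B}_4}N$ rather than $N\nabla_{\mathcal{B}_4}N$ in the integration by parts, and then substituting $f\mapsto f|x_2|$). The space is then split into $\{|x_2|^3N^{p-3}\ge R\}$, $\{|x_2|^3N^{p-3}\le R,\;N\le L\}$, and $\{|x_2|^3N^{p-3}\le R,\;N\ge L\}$. The first piece is controlled directly by the $U$-bound, the second (which is bounded) by a single application of Jerison, and on the third, unbounded, degenerate piece one performs the change of variables $x\mapsto h\circ x$ with $h=(0,2\sqrt[3]{R},0,0)$: this leaves the Lebesgue measure invariant, does not decrease $N$, and forces $|x_2|\ge\sqrt[3]{R}$ on the image, so the $U$-bound becomes applicable again. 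This translation step is the key idea that replaces, and makes unnecessary, the dyadic decomposition you propose, and it is precisely what is missing from your argument.
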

\begin{remark}
 Note that we can obtain Poincar{\'e} inequality with $q < 2$ which is much stronger than and implies the ordinary Poincar{\'e} inequality in $L^2$ (see e.g. Proposition 2.3 in \cite{BZ05}). Thus as a consequence we get a spectral gap for the Dirichlet operator defined with our probability measures and hence an exponential convergence to equilibrium in $L^2$ for the corresponding semigroup. We also note that using the same perturbation techniques as in \cite{BZ05} we can include a class of probability measures which, besides $N^p$, contain terms with slower growth at infinity.
 \end{remark}

Before turning on to prove Theorem \ref{thm.engel} one needs to proceed using techniques similar to the $U$-bound as developed in \cite{HZ10}. In particular we make use of the following lemma in which  
for $x \in \mathcal{B}_4$, we set $\vertiii{x}=|x_2|$. 

\begin{lemma}\label{lemma,engel}
Let $p,q$ be as in Theorem \ref{thm.engel}. Then, for the probability measure $\nu_p$ as in \eqref{prob.m.engel}, there exist constants $C,D\in(0,\infty)$ such that 
\begin{equation}\label{lemma,eng,stat}
\nu_p (|f|^q N^{p-3} \vertiii{\cdot}^3) \leq C \nu_{p} (|\nabla_{\mathcal{B}_4} f |^{q})+D \nu_{p} (|f|^{q})\,,
\end{equation}
for any $f$ for which the right hand side is well defined.

\end{lemma}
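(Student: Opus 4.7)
Following the $U$-bound methodology of \cite{HZ10}, the strategy is to integrate by parts against $\nu_p$ with a test function engineered so that the weight $X_2(N^p)$ produced on one side reconstructs exactly $|x_2|^3 N^{p-3}$ on the left. Since $X_2=\partial_{x_2}$ is Lebesgue-divergence-free, we have the identity
\[
\nu_p(X_2 F) \;=\; a\,\nu_p\bigl(F\,X_2(N^p)\bigr),
\]
for sufficiently regular $F$. From the computation in Lemma \ref{lemmaforN,engel} we read off $X_2 N = \|x\|x_2/N^2$, hence $X_2(N^p)=p\|x\|\,x_2\,N^{p-3}$. Choosing
\[
F \;=\; f^q\cdot\frac{x_2|x_2|}{\|x\|}
\]
makes $F\cdot X_2(N^p)=p f^q |x_2|^3 N^{p-3}$, and a direct computation (using $\partial_{x_2}(x_2|x_2|)=2|x_2|$ and $\partial_{x_2}(1/\|x\|)=-x_2/\|x\|^3$) yields
\[
X_2 F \;=\; q f^{q-1}(X_2 f)\,\frac{x_2|x_2|}{\|x\|} \;+\; f^q\Bigl(\frac{2|x_2|}{\|x\|}-\frac{|x_2|^3}{\|x\|^3}\Bigr).
\]
By the elementary bound $|x_2|\le\|x\|$, the bracketed quantity is pointwise bounded by $3$, so this piece contributes only a $D\,\nu_p(|f|^q)$ term on the right.

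For the remaining first-order term, I would estimate $|x_2|^2/\|x\|\le|x_2|$ and apply the weighted Young inequality with exponents $p,q$:
\[
f^{q-1}|X_2 f|\,\frac{|x_2|^2}{\|x\|} \;\le\; \frac{\lambda^p}{p}\,f^q\Bigl(\frac{|x_2|^2}{\|x\|}\Bigr)^{\!p} \;+\; \frac{1}{q\lambda^q}\,|X_2 f|^q.
\]
The key absorption step is the pointwise chain
\[
\Bigl(\frac{|x_2|^2}{\|x\|}\Bigr)^{\!p} \;\le\; |x_2|^p \;\le\; |x_2|^3\,N^{p-3},
\]
where the first inequality uses $|x_2|/\|x\|\le 1$ and the second uses $|x_2|\le N$ (since $N^3=\|x\|^3+|x_4|\ge\|x\|^3\ge|x_2|^3$) together with $p\ge 3$. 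Combining the identity above with these estimates, and recalling $|X_2 f|\le|\nabla_{\mathcal{B}_4}f|$, gives
\[
\bigl(ap - \tfrac{q\lambda^p}{p}\bigr)\,\nu_p\bigl(f^q|x_2|^3 N^{p-3}\bigr) \;\le\; \frac{1}{\lambda^q}\,\nu_p(|\nabla_{\mathcal{B}_4} f|^q) \;+\; 3\,\nu_p(|f|^q).
\]
Fixing $\lambda>0$ small so that $q\lambda^p/p<ap$ allows absorption of the left-hand side, yielding \eqref{lemma,eng,stat} with $C\sim\lambda^{-q}$ and $D\sim 1$. This precise inequality $p\ge 3$ is what enters: it is exactly what turns the Young residual $|x_2|^p$ into the target quantity $|x_2|^3 N^{p-3}$.

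\textbf{Main obstacle.} The principal technical nuisance is the singularity of $1/\|x\|$ on the $x_4$-axis $\{x_1=x_2=x_3=0\}$, which makes the integration by parts formally ill-posed and also creates apparent issues for $X_2(1/\|x\|)$. I would handle this by a standard regularization, replacing $\|x\|$ by $\|x\|_\varepsilon:=(\|x\|^2+\varepsilon^2)^{1/2}$ (or $\|x\|+\varepsilon$), performing the whole computation at the $\varepsilon>0$ level where every term is smooth and the boundary terms vanish thanks to the Gaussian-type decay of $e^{-aN^p}$, and finally letting $\varepsilon\downarrow 0$ by monotone/dominated convergence (using that the right-hand side is finite by hypothesis, so also the approximate left-hand side is uniformly controlled). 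A secondary bookkeeping point is to first establish the inequality for bounded smooth $f$ (so that $\nu_p(f^q|x_2|^3 N^{p-3})$ is a~priori finite and the absorption step is rigorous) and then extend to all $f$ with finite right-hand side by a cut-off/truncation argument.
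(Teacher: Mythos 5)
Your proposal is correct, and it takes a genuinely different route from the paper. The paper pairs the Leibniz identity for $\nabla_{\mathcal{B}_4}(fe^{-aN^p})$ with the \emph{vector} weight $\frac{N^2}{\|x\|^2}\nabla_{\mathcal{B}_4}N$, which forces it to use the full machinery of Lemma \ref{lemmaforN,engel} (both the upper bound on $|\nabla_{\mathcal{B}_4}N|$ \emph{and} the sub-Laplacian bound \eqref{estim.subl}, plus the lower bound $|\nabla_{\mathcal{B}_4}N|\geq|X_2N|$), to decompose $\mathcal{B}_4'$ into connected components because $N$ fails to be smooth across $\{x_3=0\}\cup\{x_4=0\}$, and to first obtain the weight $|x_2|^2N^{p-3}$ and then substitute $f\mapsto f\vertiii{\cdot}$ to reach the exponent $3$. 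You instead perform a single scalar integration by parts along $X_2=\partial_{x_2}$ with the weight $x_2|x_2|/\|x\|$ built in from the start, so that $F\cdot X_2(N^p)$ produces $p\,f^q|x_2|^3N^{p-3}$ in one stroke; this needs only the formula $X_2N=\|x\|x_2/N^{2}$, no estimate on $\Delta_{\mathcal{B}_4}N$, and no component decomposition, since for fixed $(x_1,x_3,x_4)$ the map $x_2\mapsto N^p(x)$ is $C^1$ (for $p\geq3$) along every line, so the one-dimensional integration by parts is insensitive to the sets $\{x_3=0\}$ and $\{x_4=0\}$; the only singularity is $1/\|x\|$ on the $x_4$-axis, which your $\varepsilon$-regularization (or a Fubini argument, since almost every line parallel to the $x_2$-axis misses that axis) handles cleanly. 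Both arguments close with the identical Young-inequality absorption via $\vertiii{\cdot}^p\leq\vertiii{\cdot}^3N^{p-3}$, which is where $p\geq3$ enters. What you lose relative to the paper is that its scheme is the generic $U$-bound template of \cite{HZ10}, which the authors reuse almost verbatim for the $n$-step groups in Lemma \ref{lemma,carnot}; what you gain is a shorter, more elementary proof — and since in the general case the relevant field is again a coordinate derivative ($X_1=\partial_{x_1}$ with $\vertiii{x}=|x_1|$), your trick would very likely adapt there as well. The only points to make explicit in a final write-up are the reduction to $f\geq0$ (needed for $f^{q-1}$ with $q<2$) and the verification that the boundary terms at infinity vanish, both of which are standard and also present in the paper's argument.
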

\begin{proof}We start by splitting the set
 $\mathcal{B}_{4}^{'}$ as in Lemma \ref{lemmaforN,engel} into its connected components $C_j$; that is we write $\mathcal{B}_{4}^{'}=\cup_{j \in \mathcal{J}}C_j$, where $\mathcal{J}$ is a finite set of indices.

\indent Now, for some fixed $j \in \mathcal{J}$, we consider $f$ to be such that $0 \leq f \in C^{\infty}(C_j)$ and is compactly supported; for instance one can choose $f$ to be supported on the set \begin{equation}\label{Bj}
B_j=B_j(\tilde{x},r_j):=\{x \in C_j: d(\tilde{x},x) \leq r_j\}
\end{equation}
for some $\tilde{x} \in C_j$ and for some $r_j>0$  such that $B_j(\tilde{x},r_j) \subset C_j$, where $d$ is the Carnot-Carathéodory metric. \\
\indent Then, clearly, $fe^{-N^p}$ is a differentiable function on $C_j$, and an application of the Leibniz rule yields,
\[
e^{-aN^{p}}(\nabla_{\mathcal{B}_4}f)=\nabla_{\mathcal{B}_4}(fe^{-aN^p})+apfN^{p-1}(\nabla_{\mathcal{B}_4}N)e^{-aN^p}\,,
\]
 so that, by taking the inner product of the above quantity with $\frac{N^2}{\|\cdot\|^2}\nabla_{\mathcal{B}_4}N$ and integrating over $C_j$ with respect to $\nu_{p}$, one gets
\begin{eqnarray*}
\lefteqn{\frac{1}{Z}\int_{C_j} \frac{N^2(x)}{\|x\|^2} \nabla_{\mathcal{B}_4}N(x)  \cdot\nabla_{\mathcal{B}_4}f(x)e^{-aN^p(x)}\,dx}\nonumber\\
 & = &\frac{1}{Z} \int_{C_j} \frac{N^2(x)}{\|x\|^2} \nabla_{\mathcal{B}_4}N(x)\cdot \nabla_{\mathcal{B}_4}\left(f(x)e^{-aN^p(x)}\right)\,dx\nonumber\\
&+& \frac{ap}{Z} \int_{C_j} f(x) \frac{N^{p+1}(x)}{\|x\|^2} |\nabla_{\mathcal{B}_4}N(x)|^2e^{-aN^p(x)}\,dx\,.\nonumber\\
\end{eqnarray*}
\indent An application of the Cauchy–Schwartz inequality gives $\nabla_{\mathcal{B}_4}N(x)\cdot \nabla_{\mathcal{B}_4}f(x)\leq |\nabla_{\mathcal{B}_4}N(x)||\nabla_{\mathcal{B}_4}f(x)|$, so that by the above inequality one has
\begin{eqnarray}\label{firstineq}
\lefteqn{\frac{1}{Z}\int_{C_j} \frac{N^2(x)}{\|x\|^2} |\nabla_{\mathcal{B}_4}N(x)||\nabla_{\mathcal{B}_4}f(x)|e^{-aN^p(x)}\,dx}\nonumber\\
& \geq &\frac{1}{Z} \int_{C_j} \frac{N^2(x)}{\|x\|^2} \nabla_{\mathcal{B}_4}N(x)\cdot\nabla_{\mathcal{B}_4}\left(f(x)e^{-aN^p(x)}\right)\,dx\nonumber\\
&+& \frac{ap}{Z} \int_{C_j} f(x) \frac{N^{p+1}(x)}{\|x\|^2} |\nabla_{\mathcal{B}_4}N(x)|^2 e^{-aN^p(x)}\,dx\,.\nonumber\\
\end{eqnarray}
Notice that the first term of the right-hand side of \eqref{firstineq} can be treated by using integration by parts as follows
\begin{eqnarray}\label{integbyparts}
\frac{1}{Z}\int_{C_j} \frac{N^2(x)}{\|x\|^2} \nabla_{\mathcal{B}_4}N(x)\cdot\nabla_{\mathcal{B}_4}\left(f(x)e^{-aN^p(x)}\right)\,dx=\nonumber\\
-\frac{1}{Z}\int_{C_j} f(x) \nabla_{\mathcal{B}_4}\cdot \left( \frac{N^2(x)}{\|x\|^2}\nabla_{\mathcal{B}_4}N(x) \right)e^{-aN^p(x)}\,dx\,,
\end{eqnarray}
while, by Lemma \ref{lemmaforN,engel}, one can check that 
$
|\nabla_{\mathcal{B}_4}N|^2\geq |X_2N(x)|^2 \geq \frac{\|\cdot\|^2 \vertiii{\cdot}^2}{N^4}$, 
so that the combination of \eqref{firstineq}, \eqref{integbyparts} and \eqref{estim.subg} gives 
\begin{eqnarray}\label{secondineq}
\lefteqn{\frac{ap}{Z}\int_{C_j} f(x)N^{p-3}(x)\vertiii{x}^2 e^{-aN^p(x)}\,dx}\nonumber \\
& \leq & \frac{c_1}{Z}\int_{C_j}  |\nabla_{\mathcal{B}_4}f(x)| e^{-aN^p(x)}\,dx\nonumber\\
&+& \frac{1}{Z}\int_{C_j} f(x) \nabla_{\mathcal{B}_4} \cdot \left( \frac{N^2(x)}{\|x\|^2}\nabla_{\mathcal{B}_4}N(x) \right) e^{-aN^p(x)}\,dx\,.\nonumber\\
\end{eqnarray}
Further calculations show that for any $x \in \mathcal{B}_{4}^{'}$ one has:
\begin{eqnarray}
\label{cs2}
\nabla_{\mathcal{B}_4} \cdot \left( \frac{N^2(x)}{\|x\|^2}\nabla_{\mathcal{B}_4}N(x) \right) &=& \frac{2N(x) |\nabla_{\mathcal{B}_4}N(x)|^2}{\|x\|^2}+\frac{N^2(x)}{\|x\|^2} \Delta_{\mathcal{B}_4}N(x)\nonumber\\
&-&2\frac{N^2(x)}{\|x\|^3} \nabla_{\mathcal{B}_4}N(x) \cdot \nabla_{\mathcal{B}_4} \|x\|\nonumber\\
& \leq & 2c_{1}^{2} \frac{\|x\|^2}{N^3(x)}+(c_2+c) \frac{1}{\|x\|}\,,
\end{eqnarray}
where we have used \eqref{estim.subg} and \eqref{estim.subl}, and since $|\nabla_{\mathcal{B}_4}\|\cdot\||$ is bounded by a constant, the inner product $\frac{N^2(x)}{\|x\|^3}\nabla_{\mathcal{B}_4}N(x) \cdot \nabla_{\mathcal{B}_4}\|x\|$ has been estimated by
\[
|\nabla_{\mathcal{B}_4}N(x)|| \nabla_{\mathcal{B}_4}\|x\||\leq \frac{c}{\|x\|}\,,\quad \text{for some}\quad c>0,
\] by using the Cauchy-Schwartz inequality and \eqref{estim.subg}. Hence \eqref{secondineq} under the estimate \eqref{cs2} becomes 
\begin{eqnarray*}
ap \nu_{p}(fN^{p-3}\vertiii{\cdot}^{2}\one_{C_j}) & \leq & c_1\nu_{p}(|\nabla_{\mathcal{B}_4}f| \one_{C_j})+2c_{1}^{2} \nu_{p}\left(f\frac{\|\cdot\|^{2}}{N^3}\one_{C_j}\right)\\
&+& (c_2+c)\nu_{p}\left(\frac{f}{\|\cdot\|}\one_{C_j}\right)\,,
\end{eqnarray*}
and after replacing $f$ by $f\vertiii{\cdot}$ the last yields:
\begin{eqnarray}\label{beforeputq}
\lefteqn{ap \nu_{p}(f N^{p-3}\vertiii{\cdot}^3 \one_{C_j})}\nonumber\\
& \leq & c_1\nu_{p}(\vertiii{\cdot}|\nabla_{\mathcal{B}_4}f|\one_{C_j})+c_1\nu_{p}(f|\nabla_{\mathcal{B}_4}\vertiii{\cdot}| \one_{C_j})\nonumber\\
&+&2c_{1}^{2} \nu_{p} \left(f \frac{\|\cdot\|^2 \vertiii{\cdot}}{N^3}\one_{C_j} \right)+(c_2+c) \nu_{p} \left(f \frac{\vertiii{\cdot}}{\|\cdot\|}\one_{C_j} \right)\nonumber\\
& \leq & c_{1}^{'}\nu_{p}(\vertiii{\cdot}|\nabla_{\mathcal{B}_4}f| \one_{C_j}) + c_{2}^{'}\nu_{p}(f\one_{C_j})\,,
\end{eqnarray}
for some new constants $c_{1}^{'}, c_{2}^{'}$ since $|\nabla_{\mathcal{B}_4}|x_2||=|\nabla_{\mathcal{B}_4}x_2|=1$, and $\frac{\|\cdot\|^2\vertiii{\cdot}}{N^3},\frac{\vertiii{\cdot}}{\|\cdot\|}\leq 1$.

Now, replace $f$ by $f^q$, where $q$ is such that $\frac{1}{q}+\frac{1}{p}=1$, and $p$ is as in the statement, so that \eqref{beforeputq} becomes:
\begin{equation}
\label{afterputq}
ap \nu_{p}(f^q N^{p-3}\vertiii{\cdot}^3 \one_{C_j}) \leq  c_{1}^{'} q\nu_{p}(\vertiii{\cdot}f^{q-1}|\nabla_{\mathcal{B}_4}f| \one_{C_j})+c_{2}^{'} \nu_{p}(f^q \one_{C_j})\,.
\end{equation}

Now, an application of Young's inequality gives that, for any $\epsilon>0$ and for $q,p$ as before, we have
\[
qab \leq \frac{1}{\epsilon^{q-1}}a^q+\frac{q}{p}\epsilon b^p\,,\quad a,b \geq 0\,,
\]
so that, by choosing $a=|\nabla_{\mathcal{B}_4}f|$ and $b=\vertiii{\cdot}f^{q-1}$, we get
\[
q \vertiii{\cdot}f^{q-1}|\nabla_{\mathcal{B}_4}f| \leq \frac{1}{\epsilon^{q-1}}|\nabla_{\mathcal{B}_4}f|^q+\frac{q}{p} \epsilon \vertiii{\cdot}^pf^q\,,
\]
whereas, since $\vertiii{\cdot}^{p} \leq \vertiii{\cdot}^3 N^{p-3}$, using \eqref{afterputq}, we get
\[
(ap-c_{1}^{'}\frac{q}{p} \epsilon) \nu_{p}(f^{q} N^{p-3}\vertiii{\cdot}^{3} \one_{C_j})\leq \frac{c_{1}^{'}}{\epsilon^{q-1}}\nu_{p}(|\nabla_{\mathcal{B}_4}f|^q \one_{C_j})+c_{2}^{'} \nu_{p}(f^{q}\one_{C_j})\,,
\]
and by choosing $\epsilon$ such that $ap-c_{1}^{'}\frac{q}{p}\epsilon>0$, we conclude that
\begin{equation*}
\label{poin.withCj}
\nu_{p}(f^{q}N^{p-3}\vertiii{\cdot}^3\one_{C_j}) \leq C \nu_{p} ( |\nabla_{\mathcal{B}_4}f|^q \one_{C_j})+D \nu_{p}(f^q \one_{C_j})\,,
\end{equation*}
where 
\[
C=\frac{c_{1}^{'}}{\epsilon^{q-1}(ap-c_{1}^{'}\frac{q}{p}\epsilon)}\,,\quad \text{and}\quad D=\frac{c_{2}^{'}}{ap-c_{1}^{'}\frac{q}{p}\epsilon}\,,
\]
and the proof of \eqref{lemma,eng,stat} for smooth, non-negative, compactly supported functions on $C_j$ is completed.\\

\indent Now, to handle non-negative, non-smooth functions on the same domain $C_j$, one can use an approximation argument. In particular, for $f \in W^{1,q}(C_j)$, where $W^{1,q}(C_j)$ is the Euclidean Sobolev space and $q$ is as in the hypothesis, there exists a sequence $(f_n) \in C_{c}^{\infty}(\mathbb{R}^4)$ such that  $\left.f_n\right|_{C_j}$ satisfies  \eqref{lemma,eng,stat}, and $\left.f_n\right|_{C_j} \longrightarrow f$ in $W^{1,q}(C_j)$. Indeed, as in the Euclidean setting, one can choose $f_n=\zeta_n (\rho_n \ast f)$, with $\rho_n \in C_0^\infty(\mathbb{R}^4)$ being a regularizing sequence and $\zeta_n$ being a suitable cut-off function, so that $f_n:=\zeta_n(\rho_n \ast f) \in W^{1,q}(\mathbb{R}^4)$ has compact support (the sets $B_j$ as in \eqref{Bj} are compact with respect to the Euclidean topology, so we are allowed to use the standard approximation technique). \\
\indent We claim that such $(f_n)$ can approximate $f \in W^{1,q}(C_j)$ as in the left and the right-hand side of inequality \eqref{lemma,eng,stat}. Indeed, for the left hand side \eqref{lemma,eng,stat} we have, for any $n \in \mathbb{N}$,
\begin{eqnarray*}
\nu_{p}\left(|f-f_n|^q N^{p-3}\vertiii{\cdot}^3 \one_{C_j}\right)& = &\frac{1}{Z}\int_{C_j}|f(x)-f_n(x)|^q N(x)^{p-3}\vertiii{x}^3e^{-aN^p(x)}\,dx\\
 & \leq & \frac{C}{Z} \|(f-f_n)^q\one_{C_j}\|_{L^1(dx)}=\frac{C}{Z} \|(f-f_n)\one_{C_j}\|_{L^q(dx)}^{q} , 
\end{eqnarray*}
 where we used H\"{o}lder's inequality together with  $\lim\limits_{|x|\rightarrow \infty}N^{p-3}(x)\vertiii{x}^3e^{-aN^p(x)}=0$. Thus, by the inequality above, we have
 \begin{equation}
     \label{lhs.app}
     \nu_{p}(f_{n}^{q}N^{p-3}\vertiii{\cdot}^3\one_{C_j})\longrightarrow \nu_{p}(f^{q}N^{p-3}\vertiii{\cdot}^3\one_{C_j})\,\quad \text{as}\quad n \longrightarrow \infty\,.
 \end{equation}

\indent On the other hand, for the right-hand side of \eqref{lemma,eng,stat} we proceed as follows. First observe that for any $n \in \mathbb{N}$ by H\"older's inequality we have
\begin{eqnarray}
\label{appr.rhs.poi}
	\|[x_i(\partial_{x_j}f-\partial_{x_j}f_n)]^2 (e^{-aN^p})^{\frac{2}{q}}\one_{C_j}\|_{L^{\frac{q}{2}}(dx)}^{\frac{q}{2}}& =& \||\partial_{x_j}f-\partial_{x_j}f_n|^q |x_{j}|^{q}e^{-aN^{p}}\one_{C_j}\|_{L^{1}(dx)}\nonumber\\
	& \leq & C \|(\partial_{x_j}f-\partial_{x_j}f_n)\one_{C_j}\|^{q}_{L^q(dx)}\,,\end{eqnarray}
for every $1 \leq i,j \leq 4$, since $\lim\limits_{|x|\rightarrow \infty}|x_{j}|^{q}e^{-aN^{p}(x)}=0$. Direct calculations show that 
\begin{equation}
\label{appr.rhs.poi2}
	\nu_{p}(|\nabla_{\mathcal{B}_4}(f_n-f)|^q \one_{C_j})\leq \frac{1}{Z}\sum_{j=1}^{2} \| [X_j(f_n-f)]^2 (e^{-aN^p})^{\frac{2}{q}}\one_{C_j}\|_{L^{\frac{q}{2}}(dx)}^{\frac{q}{2}}\,,
\end{equation}
so that since each $X_k$, $k=1,2$ is expressed as the sum of terms of the form $x_i\partial_{x_j}$, inequalities \eqref{appr.rhs.poi} and \eqref{appr.rhs.poi2} imply
\begin{equation}
    \label{rhs.app}
    \nu_{p}(|\nabla_{\mathcal{B}_4}f_n|^q \one_{C_j})\longrightarrow  \nu_{p}(|\nabla_{\mathcal{B}_4}f|^q \one_{C_j})\,,\quad \text{as}\quad n \longrightarrow \infty\,.
\end{equation}
Finally, combining \eqref{lhs.app} with \eqref{rhs.app}, one shows that \eqref{lemma,eng,stat} holds true for $f \in W^{1,q}(C_j)$, $f \geq 0$.

\indent To extend the domain of $f$ it is enough to observe that, since $\nu_p(\mathcal{B}_4)=\nu_p(\mathcal{B}_{4}^{'})$, we can write $f$ as $f=\sum_{j \in \mathcal{J}} \left.f\right|_{C_j}+\left.f\right|_{\mathcal{B}_4 \setminus \mathcal{B}_{4}^{'}}$.\\
\indent Finally, to handle $f$ of arbitrary sign one can replace $f$ by $|f|$, and use the equality $\nabla_{\mathcal{B}_4}|f|=sgn(f)\nabla_{\mathcal{B}_4}f$. This completes the proof.   
\end{proof}

We note that, the so called $U$-bounds mentioned above are exactly estimates of the form 
\begin{equation}\label{Ubound}
\int |f|^q g(d)\,d\mu \leq A_q \int |\nabla f|^q\,d\mu+B_q \int |f|^q\,d\mu\,,
\end{equation}
where  $\mu=e^{-U(d)}dx$  is a probability measure, and $d$ is a homogeneous norm.\\
\indent In Theorem 2.1 in \cite{HZ10} the authors proved that on any finite dimensional space with probability measure which does not necessarily satisfy the doubling property,  if $\mu$ satisfies \eqref{Ubound} for any function $g(d)$ increasing to infinity when $d\to\infty$,  with $\mu$ being absolutely continuous with respect to some measure $\lambda$ satisfying the $q$-Poincar\'{e} inequality on balls, then $\mu$ satisfies the $q$-Poincar\'{e} inequality, provided that for any $L>0$ there exists some $R=R(L) \in (0,\infty)$ such that $\{d<L\} \subset B_R$.\\
\indent The previous result, holding in the setting of a nilpotent Lie group $\mathbb{G}$, implies in particular that for a probability measure $\mu$ of the form $d\mu=Z^{-1}e^{-U(d)}dx$, where $dx$ stands for the Lebesgue measure, satisfies the $q$-Poincar\'{e} inequality provided that $\mu$ satisfies \eqref{Ubound}. The fact that the $q$-Poincar\'{e} holds on the balls with the measure $dx$ is due to Jerison; see Theorem \ref{thm.jerison}. Specifically, in the case where $\mathbb{G}=\mathbb{H}_n$, the Heisenberg group, and the measure $\mu$ is given by $d\mu=Z^{-1}e^{-ad^p}dx$, where $d$ denotes homogeneous norm associated with the Carnot-Carath\'{e}dory metric, we have that \eqref{Ubound} is satisfied, and, therefore, also the $q$-Poincar\'{e} inequality (see Theorem 2.4 \cite{HZ10}).\\
\indent Notice that in the case of $\mathcal{B}_4$, Lemma \ref{lemma,engel} implies that it would be enough to have the inclusion $\{N^{p-3}\vertiii{\cdot}^3<L\} \subset B_R$ for some $R>0$ to get the $q$-Poincar\'{e} inequality for our measure $\nu_p$. However, one can easily check that there is no such $R$ for which the previous inclusion holds, therefore one needs to proceed with a different method. 

\begin{proof}[Proof of Theorem \ref{thm.engel}:]
First notice that 
\begin{eqnarray}\label{observ-f-m}
\nu_{p}(|f-\nu_{p}f|^q)&=& \nu_{p} (|(f-m)+(m-\nu_{p}f)|^q)\nonumber\\
&\leq& \nu_{p} (\left(|f-m|+|m-\nu_{p}f| \right)^q)\nonumber\\
&\leq& \nu_{p}\Big (\big( |f-m|+\nu_{p}(|f-m|)\big)^q\Big)\,\quad (\text{since}\quad |m-\nu_{p}f|\leq\nu_{p}|f-m|)\nonumber\\
&\leq & \nu_{p} \left( 2^{q-1} \left\lbrace|f-m|^q+(\nu_{p}(|f-m|))^q\right\rbrace\right)\nonumber\\
&=&2^{q-1} \left\lbrace\nu_{p} (|f-m|^q)+\nu_{p}(|f-m|^q) \right\rbrace\nonumber\\
&=&2^q \nu_{p} (|f-m|^q)\,,
\end{eqnarray}
for any $ m\ \in \mathbb{R}$. For $R>0$ and $L>1$,
\begin{eqnarray}\label{threecases}
\nu_{p}(|f-m|)^q&=&\nu_{p} \left( |f-m|^q \one_{\{\vertiii{\cdot}^3 N^{p-3}\geq R\}} \right)+\nu_{p} \left( |f-m|^q \one_{\{\vertiii{\cdot}^3 N^{p-3}\leq R\}}\one_{\{N\leq L\}} \right)\nonumber\\
&+&\nu_{p} \left( |f-m|^q \one_{\{\vertiii{\cdot}^3 N^{p-3}\leq R\}}\one_{\{N\geq L\}} \right)\,,
\end{eqnarray}
where each term of the above sum will be treated separately.
\\

\noindent \textbf{First term of} \eqref{threecases}: Using Lemma \ref{lemma,engel} we get
\begin{eqnarray}\label{firstcase}
\nu_{p} \left( |f-m|^q \one_{\{\vertiii{\cdot}^3 N^{p-3}\geq R\}} \right)&\leq & \frac{1}{R}\nu_{p} \left( |f-m|^q N^{p-3} \vertiii{\cdot}^3 \right)\nonumber\\
&\leq & \frac{C_1}{R}\nu_{p}(|\nabla_{\mathcal{B}_4}f|^q)+\frac{D_1}{R}\nu_{p} (|f-m|^q)\,.
\end{eqnarray}
\textbf{Second term of} \eqref{threecases}: Since all homogeneous (not necessarily symmetric) norms on a group $\mathbb{G}$ are equivalent, we know that there exists a constant $C>0$ such that 
\begin{equation}\label{homog.}
C^{-1}N(x)\leq d(x) \leq C N(x)\,,\quad x \in \mathcal{B}_4\,,\nonumber
\end{equation}
where $d$ is the Carnot-Carath\'{e}odory distance and $d(x):=d(x,0)$. Thus, given $L>1$, there exists $L_1,L_2$ such that 
\[
\{N \leq L\}:=\{x \in \mathcal{B}_4\,:N(x) \leq L\} \subset B_{L_1}:=\{x \in \mathcal{B}_4\,:\, d(x)\leq L_1\} \subset \{N \leq L_2\}\,.
\]
Now, if $f \in W^{1,q}(\mathbb{R}^4)$, arguing as in Lemma \ref{lemma,engel} one can show that there exists a sequence $(f_n) \in C_{c}^{\infty}(\mathbb{R}^4)$ such that  $\left.f_n\right|_{B_{L_1}}\longrightarrow f$ in $W^{1,q}(B_{L_1})$. Then, given $n \in \mathbb{N}$, and setting 
\[
m=\frac{1}{|B_{L_1}|}\int_{B_{L_1}}f_n(x)\,dx\,,
\]
by using Theorem \ref{thm.jerison} one gets:
\begin{eqnarray*}
\nu_{p}\left(|f_n-m|^{q} \one_{\{\vertiii{\cdot}^3N^{p-3} \leq R\}}\one_{\{N\leq L\}}\right)& \leq & \frac{1}{Z}\int_{\{N\leq L\}} |f_n(x)-m|^{q}\,dx\nonumber\\
&\leq& \frac{1}{Z}\int_{\{d\leq L_1\}} |f_n(x)-m|^q\,dx\nonumber\\
&\leq& \frac{P_0(L_1)}{Z}\int_{\{d \leq L_1\}}|\nabla_{\mathcal{B}_4}f_n(x)|^q\,dx\\
& \leq & \frac{P_0(L_1)}{Z}\int_{\{N \leq L_2\}}|\nabla_{\mathcal{B}_4}f_n(x)|^q\,dx
\end{eqnarray*}
and letting $n \longrightarrow \infty$ we have
\begin{equation}\label{sec.case.engel.a}
\nu_{p}\left(|f-m|^{q} \one_{\{\vertiii{\cdot}^3N^{p-3} \leq R\}}\one_{\{N\leq L\}}\right) \leq \frac{P_0(L_1)}{Z}\int_{\{N\leq L_2\}}|\nabla_{\mathcal{B}_4}f(x)|^q\,dx\,.
\end{equation}
We now estimate the right-hand side of the above inequality as follows
\begin{equation}
    \label{sec.case.engel.b}
    \int_{\{N \leq L_2\}}|\nabla_{\mathcal{B}_4}f(x)|^q\,dx \leq e^{aL_{2}^{p}} \int_{\{N \leq L_2\}}|\nabla_{\mathcal{B}_4}f(x)|^qe^{-aN^{p}(x)}\,dx\leq e^{aL_{2}^{p}} \nu_{p}|\nabla_{\mathcal{B}_4}f|^q\,,
\end{equation}
so that, combining \eqref{sec.case.engel.a} with \eqref{sec.case.engel.b}, we finally have
\begin{equation}\label{second.case.thm.eng}
    \nu_{p}\left(|f_n-m|^{q} \one_{\{\vertiii{\cdot}^3N^{p-3} \leq R\}}\one_{\{N\leq L\}}\right) \leq P_0(L_1) e^{aL_{2}^{p}} \nu_{p}|\nabla_{\mathcal{B}_4}f|^q\,.
\end{equation}
\textbf{Third term of} \eqref{threecases}: Set $\overline{f}=f-m$ and define the set
 \[
A_{L,R}:=\{x \in \mathcal{B}_{4} : \vertiii{x}^3 \leq R, N(x) \geq L\}\,.
\]
Since $L>1$, we have
\[
\{ x \in \mathcal{B}_4 : \vertiii{\cdot}^3 N^{p-3}(x) \leq R, N(x) \geq L\} \subset A_{L,R}\,.
\]
We fix $L>R$ and choose $L$ and $R$ later.

Let now $\gamma: [0,t]\rightarrow G$ be a horizontal curve such that $|\Dot{\gamma}(s)|\leq 1$, $\vertiii{\gamma(t)\circ x}\geq R$, and there exists $C>0$ such that 
\begin{equation}
   R\leq N(\gamma(s)\circ x)\leq N(x),\quad \forall s\in[0,t].
\end{equation}
The construction of such a curve is possible at a local level; one can estimate \eqref{threecases} locally and then find the desired estimate on the whole set by standard arguments. Details are left to the reader. Hence, below, we assume for convenience that the curve is defined globally on $A_{L,R}$ and perform the argument directly.
Thus, using the notation $h:=\gamma(t)$, we have
\begin{eqnarray}\label{with_h}
\nu_{p} \left( |f-m|^q \one_{\{\vertiii{x}^3 N^{p-3} \leq R\}} \one_{\{N \geq L\}}\right)& \leq & \int_{A_{L,R}} |\overline{f}(x)|^q\,d\nu_{p}(x)\nonumber\\
& \leq & \int_{A_{L,R}^{}} |\overline{f}(x)-\overline{f}(h\circ x)|^q\,d\nu_{p}(x) \nonumber\\
&+&\int_{A_{L,R}^{}} |\overline{f}(h\circ x)|^q\,d\nu_{p}( x)\,.\label{eqn22061}
\end{eqnarray}

The first term in the last inequality can be bounded as follows
\begin{align*}
    \int_{A_{L,R}^{}} |\overline{f}(x)-\overline{f}(h\circ x)|^q\,d\nu_{p}(x)&= \int_{A_{L,R}^{}} \left|\int_0^t \frac{d}{ds}\overline{f}(\gamma(s)\circ x)\right|^q\,d\nu_{p}(x)\\
    &\leq C\int_{A_{L,R}^{}} |\nabla_{\mathcal{B}_4} f (\gamma(s)\circ x)|^q d\nu_p(x)\\
    &\leq C \int_{A_{L,R}^{}} |\nabla_{\mathcal{B}_4} f (\gamma(s)\circ x)|^q d\nu_p(\gamma(s)\circ x),\\
     &\leq C \int_{A'_{L,R}} |\nabla_{\mathcal{B}_4} f (y)|^q d\nu_p(y),\\
    &\leq  C \nu_p(|\nabla_{\mathcal{B}_4} f|^q ),
\end{align*}
where in the third last line we have used the property $N(h\circ x)^p\leq N(x)^p$, while in the second last line we applied a change of variables.

To estimate the last term in \eqref{eqn22061} first obsevrve that it as always possible to  choose  $\gamma$ in such a way so  that for a chosen element  $h=(h_1,h_2,h_3,h_4)$ satisfying $h_2\geq 2\sqrt[3]{R}$ we  have $\vertiii{h \circ x} \geq |2\sqrt[3]{R}|-|x_2| \geq \sqrt[3]{R}$  on $A_{L,R}^{}$. Then by using Lemma \ref{lemma,engel} and the inequalities above, we get 

\begin{eqnarray}\label{thirdcase}
\lefteqn{\int_{A_{L,R}^{}} |\overline{f}(h \circ x)|^q\,d\nu_{p}(x)}\nonumber\\
& \leq & \frac{1}{R R^{p-3}} \int_{A_{L,R}} |\overline{f}(h \circ x)|^q \vertiii{h \circ x}^3 N^{p-3}(h \circ x)\,d\nu_{p}(h \circ x)\nonumber\\
& \leq & \frac{1}{R^{p-2}} \nu_{p} \left(|\overline{f}|^q \vertiii{\cdot}^3 N^{p-3} \right)\nonumber\\
& \leq & \frac{C_2}{R^{p-2}} \nu_{p}(|\nabla_{\mathcal{B}_4}f|^q) +  \frac{D_2}{R^{p-2}} \nu_{p}(|f-m|^q)\,,
\end{eqnarray}
for some constants $C_2,D_2$. This completes the proof of the estimate for the third term of \eqref{threecases}. Inserting the estimates \eqref{firstcase}, \eqref{second.case.thm.eng} and \eqref{thirdcase} into \eqref{threecases},  we arrive at 
\begin{align*}
\nu_{p}(|f-m|^q) \leq& \left( \frac{C_1}{R}+P_{0}(L_1)e^{aL_{2}^{p}}+\frac{C_2}{R^{p-2}} \right) \nu_{p}(|\nabla_{\mathcal{B}_4}f|^q) \\
+&\left(\frac{D_1}{R}+\frac{D_2}{R^{p-2}} \right) \nu_{p}(|f-m|^q)\,,
\end{align*}
where $R,L$ can be taken large enough so that $\frac{D_1}{R}+\frac{D_2}{R^{p-2}}<1$ and $L>R$. Upon rearrangements the last inequality together with \eqref{observ-f-m}
  proves Theorem \ref{thm.engel}.
\end{proof}
Using Lemma \ref{lemma,engel} and a perturbation technique one can obtain the following generalisation. 
\begin{corollary}\label{cor,gen,eng}
Let $p\geq 3$, $q$ conjugate exponent of $p$ (i.e. $\frac{1}{p}+\frac{1}{q}=1$),  and $\nu_p$ be as in \eqref{prob.m.engel}. 
Let also $d\nu_{w}=\tilde{Z}^{-1}e^{-W}d\nu_{p}$ be a probability measure, where $W$ is a differentiable potential and $\tilde{Z}:=\int e^{-W}d\nu_p$ a normalization constant.
If there exist $0<\delta<\frac{1}{C(q)}$ and $\gamma_{\delta} \in (0,\infty)$ such that
\begin{equation}
\label{gen.meas}
  |\nabla_{\mathcal{B}_4}W|^{q}\leq \delta N^{p-3}\vertiii{\cdot}^3+\gamma_{\delta} ,
\end{equation} 
then the measure $\nu_{w}$ satisfies \eqref{lemma,eng,stat}. Moreover, if there exists $\tilde{C}>0$ such that $W \leq \tilde{C}N$, then $\nu_{w}$ satisfies the $q$-Poincar{\'e} inequality.
\end{corollary}

\begin{proof}
To prove the first part of Corollary \ref{cor,gen,eng} we proceed by substituting $fe^{-\frac{W}{q}}$ in the inequality \eqref{lemma,eng,stat} and get 
\begin{equation}
\label{gen,stat,1}
   \nu_{p}\left( e^{-W}|f|^q N^{p-3}\vertiii{\cdot}^3 \right) \leq C \nu_{p} \left(|\nabla_{\mathcal{B}_4}(e^{-\frac{W}{q}}f)|^q\right)+D \nu_{p} \left( |e^{-\frac{W}{q}}f|^q \right) \,,
\end{equation}
where 
\begin{eqnarray}
\label{gen,stat,2}
|\nabla_{\mathcal{B}_4}(e^{-\frac{W}{q}}f)|^q & = & \left|\left(\nabla_{\mathcal{B}_4}e^{-\frac{W}{q}}\right)|f|+e^{-\frac{W}{q}}\nabla_{\mathcal{B}_4}|f|\right|^q\nonumber\\
& \leq & \left( \frac{|\nabla_{\mathcal{B}_4}W|}{q}|e^{-\frac{W}{q}}f|+e^{-\frac{W}{q}}|\nabla_{\mathcal{B}_4}f|\right)^q\nonumber\\
& \leq & C(q) \left(|\nabla_{\mathcal{B}_4}W|^q e^{-W}|f|^q+e^{-W}|\nabla_{\mathcal{B}_4}f|^q\right)\,.
\end{eqnarray}
Substituting \eqref{gen,stat,2} in \eqref{gen,stat,1} and using \eqref{gen.meas} we get
\begin{eqnarray*}
\nu_{w} \left(f^q N^{p-3}\vertiii{\cdot}^3 \right) & \leq & CC(q) \nu_{w} \left( |\nabla_{\mathcal{B}_4}W|^q|f|^q\right) +CC(q) \nu_{w} (|\nabla_{\mathcal{B}_4}f|^q)+D\nu_{w} (|f|^q)\nonumber\\
& \leq & \delta CC(q) \nu_{w} (N^{p-3}\vertiii{\cdot}^3|f|^q)+\gamma_{\delta} CC(q)\nu_{w} (|f|^q)\nonumber\\
&+& CC(q) \nu_{w} (|\nabla_{\mathcal{B}_4}f|^q)+D\nu_{w} (|f|^q)\,,
\end{eqnarray*}
and this proves our first claim provided that $1-\delta CC(q)>0$, that is  for $\delta< C(q)^{-1}$, with $C(q)$ being a new suitable constant depending on $q$.\\
\indent Now to prove the $q$-Poincar{\'e} inequality for the measure $\nu_{w}$ we decompose $\nu_{w}(|f-m|^q)$ as in \eqref{threecases}, i.e., we write
\begin{eqnarray}\label{threecases,gen,}
\nu_{w}(|f-m|^q)&=&\nu_{w} \left( |f-m|^q \one_{\{\vertiii{\cdot}^3 N^{p-3}\geq R\}} \right)+\nu_{w} \left( |f-m|^q \one_{\{\vertiii{\cdot}^3 N^{p-3}\leq R\}}\one_{\{N\leq L\}} \right)\nonumber\\
&+&\nu_{w} \left( |f-m|^q \one_{\{\vertiii{\cdot}^3 N^{p-3}\leq R\}}\one_{\{N\geq L\}} \right)\,,
\end{eqnarray}
for some $R>0$ and $L>1$. Notice that for the first and third terms of \eqref{threecases,gen,} one can proceed as in Theorem \ref{thm.engel}. For the second term of \eqref{threecases,gen,}, arguing as in Theorem \ref{thm.engel} \eqref{sec.case.engel.a}, we get
\begin{eqnarray*}
\nu_{w}(|f-m|^q \one_{\{\vertiii{\cdot}^3N^{p-3} \leq R\}}\one_{\{N \leq L\}})& \leq & \frac{P_0(L_1)}{\tilde{Z}}\int_{\{N \leq L_2\}}|\nabla_{\mathcal{B}_4}f|^q\,dx\\
& \leq & \frac{P_0(L_1)}{\tilde{Z}}e^{aL_{2}^{p}+\tilde{C}L_2}\nu_{w} (|\nabla_{\mathcal{B}_4}f|^q)\,,
\end{eqnarray*}
since $W \leq \tilde{C}N \leq \tilde{C}L_2$ in $\{N \leq L_2\}$. This completes the proof.
\end{proof}

\begin{remark}\label{rmk.Engel}
We remark that, given a measure $\nu_p$ for which the $q$-Poincar\'{e} inequality holds, it is not in general possible to get the same property for any other measure $\nu'_p$ equivalent to $\nu_p$. Indeed it is immediate to see that, using the $q$-Poincar\'{e} inequality for $\nu_p$ together with the equivalence of the measures, we would get different probability measures on the right and on the left hand side of the inequality.
\end{remark}
\begin{remark}
Note that the $q$-Poincar{\'e} inequality in the setting of $\mathcal{B}_4$, as well as the statement of Corollary \ref{cor,gen,eng}, will still hold true if one chooses to consider the (canonical) left-invariant sub-gradient on the group instead of the right-invariant sub-gradient. This follows from Theorem \ref{thm.carnot}, that applies to the general setting of Carnot groups with filiform algebra, in the particular case where $n=3$.
\end{remark}

\section{$q$-Poincar{\'e} Inequality on filiform Carnot groups of $n$-step}\label{q-sp.gen}
In this section we prove the $q$-Poincar{\'e} inequality in the more general setting of a filiform Carnot group $\mathbb{G}_{n+1}$ of $n$-step, where $n \geq 3$, as described in Section \ref{Carnot}. The followed strategy relies on the one developed for the Engel group $\mathcal{B}_4$. 
However, here we will be working with the canonical left-invariant vector fields, a difference with respect to the analysis of $\mathcal{B}_4$ in the section above. As previously mentioned, this serves as a proof that, at least for the Engel group $\mathcal{B}_4$, Poincarè inequalities hold true  with both left and right-invariant canonical $\mathbb{G}$-grandients.\\
\indent We start by generalizing the homogeneous norm $N$ on $\mathcal{B}_4$ in the $\mathbb{G}_{n+1}$-setting. Indeed the function 
\begin{equation}
    \label{defn,norm,car}
    \tn(x):= \left(\|x\|^n+|x_{n+1}|\right)^{\frac{1}{n}}\,,
\end{equation}
where $\|x\|^n:=\sum_{j=2}^{n}\left( \aj\right)^{\fr}$, defines a homogeneous norm on $\mathbb{G}_{n+1}$. Actually, the precise generalization of the norm $N$ on $\mathcal{B}_4$ is as in \eqref{defn,norm,car} with the sum taken from $j=3$ instead of $j=2$. Let us call such norm $\bar{N}$. We could very well consider this norm in place of $\tilde{N}$ in our analysis, and the results below could be spelled out in terms of $\bar{N}$ with no modifications. Our choice to take the norm $\tilde{N}$ is just to simplify and slightly shorten the expression of some computations in Lemma \ref{lemma.N,carnot}. However, Lemma \ref{lemma.N,carnot} is still true for $\bar{N}$, and thus also the main theorem, namely Theorem \ref{thm.carnot} (i.e. the $q$-Poincaré inequality) which follows from this lemma.  For completeness, and to convince the reader that everything can be repeated with $\bar{N}$, we will show in Remark \ref{rem.gen.norm} below some key computations for this norm.

\indent Note that, in what follows, since the changes of the constants of the form $C_n$ that occur when additional terms appear do not play an essential role for our final result, for the reader's convenience we shall simply denote them $C_n$ at any appearance. Formally, one can initially choose $C_n$'s to be large enough so that they can ``absorb" any additional term that might appear. \\


\indent Recall that the generators of the lie algebra $\mathfrak{g}_{n+1}$ (the canonical left-invariant vector fields) are of the form
\[
X_1=\partial_{x_1}\,,\quad \text{and}\quad X_2=\partial_{x_2}+x_1\partial_{x_3}+\frac{x_{1}^{2}}{2!}\partial_{x_4}+\cdots+\frac{x_{1}^{n-1}}{(n-1)!}\partial_{x_{n+1}}\,.
\]
The following lemma describes the behaviour of the norm $\tn$ under the action of the canonical left-invariant operators $\subg$ and $\subl$, that is, more precisely, it shows the validity of some bounds for the sub-gradient and the sub-Laplacian of $\tn$ needed to apply our technique.
\begin{lemma}
\label{lemma.N,carnot}
Let $\tn$ be the norm in \eqref{defn,norm,car} defined on the homogeneous filiform Carnot group $\mathbb{G}_{n+1}$, $ n\geq 3$. Then, $\tn$ is smooth on $\mathbb{G}_{n+1}^{'}:=\mathbb{G}_{n+1}\setminus \{\bigcup\limits_{j=1}^{n+1}\{x: x_j=0\} \}$, and, in particular, for $x \in \mathbb{G}_{n+1}^{'}$ we have the estimates
\begin{equation}
    \label{est.subg,carnot}
    |\subg \tn(x)| \leq C_{n}^{1} \frac{\|x\|^{n-1}}{\tn^{n-1}(x)}\,,
\end{equation}
and 
\begin{equation}
    \label{est.subl,carnot}
    \subl \tn(x) \leq C_{n}^{2} \frac{\|x\|^{n-2}}{\tn(x)^{n-1}}\,,
\end{equation}
for some constants $C_{n}^{1},C_{n}^{2}>0$.
\end{lemma}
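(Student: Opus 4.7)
\medskip

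\noindent\textbf{Proof plan for Lemma~\ref{lemma.N,carnot}.}
The plan is to mimic the strategy of Lemma~\ref{lemmaforN,engel}, exploiting the explicit formula for the generators and the homogeneity structure, while paying attention to the extra combinatorial content introduced by the sum $\|x\|^n=\sum_{j=2}^{n}a_j^{2n/(n+1)}$ with $a_j=|x_1|^{(n+1)/2}+|x_2|^{(n+1)/2}+|x_j|^{(n+1)/(2(j-1))}$. Throughout I set $f:=\|x\|^n+|x_{n+1}|=\tn^{\,n}$ and record the basic inequalities
\[
|x_1|,|x_2|\le\|x\|,\qquad |x_j|\le\|x\|^{\,j-1}\ (j\ge 3),\qquad a_j\le\|x\|^{(n+1)/2},\qquad \|x\|\le\tn,
\]
all of which follow from the definition of $a_j$ and $\tn$ and from the fact that each summand of $a_j$ is bounded by $a_j$ itself. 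On $\mathbb{G}_{n+1}'$ none of the quantities $x_j$ vanishes, so $a_j>0$ and the fractional powers are smooth; hence $\tn\in C^{\infty}(\mathbb{G}_{n+1}')$.

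\medskip

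\noindent\emph{Step 1: estimate for $\subg\tn$.} From $\tn=f^{1/n}$ I get $X_i\tn=\frac{1}{n}\tn^{\,1-n}X_if$, so the task reduces to bounding $X_1f$ and $X_2f$ by $C_n\|x\|^{n-1}$. Since $X_1=\partial_{x_1}$ annihilates $|x_{n+1}|$ and only hits the $|x_1|^{(n+1)/2}$ pieces of each $a_j$, the chain rule yields a finite sum of terms of the form $a_j^{(n-1)/(n+1)}|x_1|^{(n-1)/2}$; each such term is bounded by $\|x\|^{(n-1)/2}\cdot\|x\|^{(n-1)/2}=\|x\|^{n-1}$. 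For $X_2$ I write $X_2=\partial_{x_2}+\sum_{k=3}^{n+1}\frac{x_1^{k-2}}{(k-2)!}\partial_{x_k}$; it hits $|x_2|^{(n+1)/2}$, the $|x_j|^{(n+1)/(2(j-1))}$ pieces, and also $|x_{n+1}|$. The derivative of the $j$-th piece produces the sum $\alpha|x_2|^{(n-1)/2}\operatorname{sgn}(x_2)+\beta_j|x_j|^{(n+1)/(2(j-1))-1}\operatorname{sgn}(x_j)\,x_1^{\,j-2}/(j-2)!$, and a homogeneity check shows that each summand has degree $(n-1)/2$, hence is bounded by a constant times $\|x\|^{(n-1)/2}$; multiplying by $a_j^{(n-1)/(n+1)}\le\|x\|^{(n-1)/2}$ gives $\|x\|^{n-1}$. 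Finally $X_2|x_{n+1}|=\frac{x_1^{n-1}}{(n-1)!}\operatorname{sgn}(x_{n+1})$ is also $\le\|x\|^{n-1}$. Adding the bounds and dividing by $n\tn^{\,n-1}$ produces \eqref{est.subg,carnot}.

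\medskip

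\noindent\emph{Step 2: estimate for $\subl\tn$.} Differentiating $\tn=f^{1/n}$ twice I get
\[
X_i^2\tn=-\frac{n-1}{n^2}\,\frac{(X_if)^2}{\tn^{\,2n-1}}+\frac{X_i^2f}{n\,\tn^{\,n-1}},
\]
so the first summand is nonpositive and it suffices to bound $X_1^2f+X_2^2f$ from above by $C_n\|x\|^{n-2}$. Because $X_2$ contains no $\partial_{x_1}$, a direct inspection shows that $X_2^2|x_{n+1}|=0$ on $\mathbb{G}_{n+1}'$ (the $\operatorname{sgn}(x_{n+1})$ is locally constant there), while $X_1|x_{n+1}|=0$ trivially. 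Hence one only needs to estimate the purely ``norm'' terms $X_i^2\|x\|^n=\sum_j X_i^2 a_j^{2n/(n+1)}$. Applying the chain rule to $a_j^{2n/(n+1)}$ produces two families of summands, namely $a_j^{-2/(n+1)}(X_ia_j)^2$ and $a_j^{(n-1)/(n+1)}X_i^2a_j$. By homogeneity (the vector fields have dilation degree $-1$, $a_j$ has degree $(n+1)/2$) both families have total degree $n-2$, matching the desired bound.

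\medskip

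\noindent\emph{Step 3: the core estimate and the main obstacle.} The delicate step is to turn the degree count into a pointwise inequality bounding the two families uniformly by $C_n\|x\|^{n-2}$. I will argue exactly as in the Engel computation following \eqref{calc.delta}: the apparently singular factor $a_j^{-2/(n+1)}$ is compensated by observing $a_j\ge|x_1|^{(n+1)/2}$, $a_j\ge|x_2|^{(n+1)/2}$, and $a_j\ge|x_j|^{(n+1)/(2(j-1))}$, so that each cross product $a_j^{-2/(n+1)}|x_k|^{2\gamma_k-2}|x_1|^{2(k-2)}$ (arising after squaring $X_ia_j$ via $(A+B)^2\le 2A^2+2B^2$) collapses to a monomial of homogeneous degree $n-2$ with nonnegative exponents in $\|x\|$. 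This cancellation is precisely the $\|x\|^{-1}$-against-$\|x\|^2$ cancellation visible in the displayed computation of $\Delta_{\mathcal{B}_4}N$, only carried out for every $j\in\{2,\dots,n\}$ and every variable. The second family $a_j^{(n-1)/(n+1)}X_i^2a_j$ is bounded more easily, since $X_i^2a_j$ is again a homogeneous expression of degree $(n-3)/2$ whose singularities (inverse powers of $|x_k|$) cancel against the nonnegative powers in $|x_k|^{2\gamma_k-2}$ after the same case analysis. The main obstacle is therefore the bookkeeping required to check, for every $j$ and every pair of variables, that the candidate monomial has nonnegative exponents and hence is bounded by $\|x\|^{n-2}$; I expect this to be routine but notationally heavy. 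Combining everything and dividing by $n\tn^{\,n-1}$ yields \eqref{est.subl,carnot}, completing the proof.
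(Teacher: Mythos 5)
Your plan follows the paper's computation closely: write $\tn=f^{1/n}$ with $f=\|x\|^n+|x_{n+1}|$, compute $X_i\tn=\tfrac{1}{n}\tn^{1-n}X_if$ and the analogous second-derivative identity, drop the nonpositive $-(X_if)^2$ contribution for $\subl$, and bound the remaining terms via $|x_1|,|x_j|^{1/(j-1)}\le\|x\|$ and $a_j\le\|x\|^{(n+1)/2}$. In that sense the approach is the same as the paper's.

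The gap is exactly the step you label a ``routine homogeneity check,'' and it is present both in your outline and, tacitly, in the paper's own argument. Differentiating $a_j$ with $X_2$ produces the factor $|x_j|^{(n+1)/(2(j-1))-1}$, whose exponent is \emph{negative} for every $j>(n+3)/2$, which occurs for some $j\le n$ as soon as $n\ge 4$. A monomial of homogeneous degree $(n-1)/2$ that contains a negative power of $|x_j|$ is not controlled by $\|x\|^{(n-1)/2}$: let $x_j\to 0$ with all other coordinates fixed and nonzero, then the factor diverges while $\|x\|$ and $\tn$ remain bounded. The compensation you propose in Step~3, using $a_j\ge|x_j|^{(n+1)/(2(j-1))}$ so that $a_j^{-2/(n+1)}\le|x_j|^{-1/(j-1)}$, goes the wrong way — it inserts an additional negative power of $|x_j|$ rather than removing one, and the combined exponent $n/(j-1)-2$ is still negative for $j>n/2+1$; an interpolation between the three lower bounds for $a_j$ does not help either. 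Concretely, for $n\ge 4$ the bound \eqref{est.subg,carnot} fails pointwise: at $x$ with $x_j=\varepsilon\to 0$ for some $j>(n+3)/2$ and the remaining coordinates equal to $1$, the left side $|\subg\tn(x)|\to\infty$ while $\|x\|^{n-1}/\tn(x)^{n-1}$ stays bounded. For $n=3$ all exponents $(n+1)/(2(j-1))-1$ are nonnegative and your outline does recover the paper's argument; for $n\ge 4$ neither your proposal nor the paper's proof of Lemma~\ref{lemma.N,carnot} closes as written, and the homogeneous norm $\tn$ would need to be modified (or the estimate reformulated) before proceeding.
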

\begin{proof}
For $x \in \gp$, $x \neq 0$, we have
\begin{equation*}
    X_1(\|x\|^n+|x_{n+1}|)=sgn(x_1)\,n\,|x_1|^{\frac{n-1}{2}}\S(\aj)^{\frac{2n}{n+1}-1},
\end{equation*}
and
\begin{equation*}
\begin{split}
    X_2(\|x\|^n+|x_{n+1}|)&=n\S 
    sgn(x_j)\frac{x_1^{j-2}}{(j-1)!}(1+\delta(j-2))\,|x_j|^{\frac{n+1}{2(j-1)}-1}\\
    &\times(\aj)^{\frac{2n}{n+1}-1}+ \frac{x_1^{n-1}}{(n-1)!}sgn(x_{n+1}),
    \end{split}
\end{equation*}
where $\delta(j-2)=1$ if $j-2=0$ and $\delta(j-2)=0$ otherwise.
By using the previous computations we get
\begin{equation}
\label{estim.for.upper.carnot}
\begin{split}
X_1 \tn(x)&=\frac{1}{n\tn^{n-1}}X_1(\|x\|^n+|x_{n+1}|)\\
&=\frac{sgn(x_1)\, |x_{1}|^{\frac{n-1}{2}} \S \left(\aj \right)^{\fr-1}  }{\tn^{n-1}(x)}\,,
\end{split}
\end{equation}
and, similarly, 
 
  \begin{equation*} 
\begin{split}
X_{2}\tn(x) & = \frac{1}{n\tn^{n-1}}X_2(\|x\|^n+|x_{n+1}|)\\
&=\frac{ \S \left(\aj \right)^{\fr-1}}{n\tn^{n-1}(x)} \\
 & \times \frac{ sgn(x_j)(1+\delta(j-2))  \frac{x_{1}^{j-2}}{(j-1)!}  |x_{j}|^{\frac{n+1}{2(j-1)}-1} }{n\tn^{n-1}(x)}+\frac{ \frac{x_1^{n-1}}{(n-1)!}\,sgn(x_{n+1})}{n\tn^{n-1}(x)}.
\end{split}
\end{equation*}
Therefore we have
\[
|X_1\tn(x)| \leq  C_{n}^{1}\frac{\|x\|^{n-1}}{\tn^{n-1}(x)}\,,
\]
and 
\[
|X_{2}\tn(x)| \leq C_{n}^{1} \frac{\|x\|^{\frac{n-1}{2}}\S \|x\|^{j-2}\|x\|^{\left(\frac{n+1}{2(j-1)}-1 \right)(j-1)}+\|x\|^{n-1}}{\tn^{n-1}(x)}\leq C_{n}^{1}\frac{\|x\|^{n-1}}{\tn^{n-1}(x)}\,,
\]
since $|x_1|,|x_j|^{\frac{1}{j-1}}\leq \|x\|$ and $\aj \leq \|x\|^{\frac{n+1}{2}}$, which concludes the proof of \eqref{est.subg,carnot}.\\
\indent As regards the proof of \eqref{est.subl,carnot}, we first observe that 
\begin{equation*}
    \begin{split}
        X_1^2\tn(x)&=\frac{c_n^1 sgn(x_1)|x_1|^{\frac{n-3}{2}}  
        \S (\aj)^{\frac{2n}{n+1}-1}}{\tn(x)^{n-1}}\\
        &+\frac{c_n^2sgn(x_1)|x_1|^{n-1}\S(\aj)^{\frac{2n}{n+1}-2}}{\tn^{n-1}}\\
        &-\frac{n-1}{n}\frac{(X_1(\|x\|^n+|x_{n+1}|))^2}{\tn(x)^{2n-1}},
    \end{split}
    \end{equation*}
where $c_n^1, c_n^2$ are constants depending on $n$, and that
\begin{equation*}
    \begin{split}
        X_2^2\tn(x)&=\frac{ \S c^1_{n,j}\, sgn(x_j)\,x_1^{2(j-1)} |x_j|^{\frac{n+1}{2(j-1)}-2} (\aj)^{\frac{2n}{n+1}-1}}{\tn(x)^{n-1}}\\
        &+\frac{\S c^2_{n,j} sgn(x_j)\,x_1^{2(j-1)} |x_j|^{\frac{n+1}{2(j-1)}-1} (\aj)^{\frac{2n}{n+1}-2}}{\tn(x)^{n-1}}\\
         &-\frac{n-1}{n}\frac{(X_2(\|x\|^n+|x_{n+1}|))^2}{\tn(x)^{2n-1}},
    \end{split}
\end{equation*}
where $c_{n,j}^1, c_{n,j}^2$ are constants depending on $n$ and $j$.

Therefore, for all $x\in \mathbb{G'}_{n+1}$, we have
\begin{equation*}
    \begin{split}
  \subl \tn(x)&\leq C_n^2 \frac{\|x\|^{n-2}}{\tn^{n-1}(x)}
  -\frac{n-1}{n}\frac{(X_1(\|x\|^n+|x_{n+1}|))^2+(X_2(\|x\|^n+|x_{n+1}|))^2}{\tn(x)^{2n-1}}\\
  & \leq C_n^2 \frac{\|x\|^{n-2}}{\tn^{n-1}(x)},
    \end{split}
\end{equation*}
which concludes the proof of \eqref{est.subl,carnot}.
\end{proof}

\begin{remark}[Upper bound for $\nabla_{\mathbb{G}_{n+1}}\bar{N}$]\label{rem.gen.norm}
We show here some crucial calculations for the norm  $\bar{N}$ defined as
\begin{equation}\label{norm.Nbar}
    \bar{N}:=\left(\|x\|^n+|x_{n+1}|\right)^{\frac{1}{n}}\,,
\end{equation}
where 
$$\|x\|^n:=\sum_{j=3}^{n}\left( \aj\right)^{\fr}.$$
Note that this norm coincides exactly with $N$ when $n=3$ in $\mathbb{G}_{n+1}$.  \\
\indent We will give formulas for the quantities $X_j \bar{N}(x)$, $j=1,2$,  needed to compute the left-invariant sub-gradient and sub-Laplacian on $\mathbb{G}'_{n+1}$. By following the strategy in the proof of Lemma \ref{lemma.N,carnot}, the calculations below readily give the validity  of Lemma \ref{lemma.N,carnot} for $\bar{N}$.\\
\indent For $x \in \gp$, $x \neq 0$, we have
\begin{equation*}
    X_1(\|x\|^n+|x_{n+1}|)=sgn(x_1)\,n\,|x_1|^{\frac{n-1}{2}}\sum_{j=3}^n(\aj)^{\frac{2n}{n+1}-1},
\end{equation*}
and
\begin{equation*}
\begin{split}
    X_2(\|x\|^n+|x_{n+1}|)&=n\sum_{j=3}^n\big(sgn(x_2)|x_2|^{\frac{n+1}{2}-1}+ sgn(x_j)\frac{x_1^{j-2}}{(j-2)!}\,|x_j|^{\frac{n+1}{2(j-1)}-1}\big)\\
    &\times(\aj)^{\frac{2n}{n+1}-1}+ \frac{x_1^{n-1}}{(n-1)!}sgn(x_{n+1}).
    \end{split}
\end{equation*}
Note that comparing the formulas above with those for $\tilde{N}$, we have a slightly longer formula for $X_2\bar N$. This will not cause any issue. In fact, since
$$X_j \bar{N}(x)=\frac{1}{n \bar{N}^{n-1}}X_j(\|x\|^n+|x_{n+1}|),\quad \forall x\in\mathbb{G}'_{n+1},\quad \forall j=1,2,$$
and
\begin{align*}
    |x_1|^{\frac{n+1}{2}-1}&\lesssim \|x\|^{\frac{n-1}{2}}\\
    |x_1|^{j-2}&\lesssim \|x\|^{j-2}\\
    |x_2|^{\frac{n+1}{2}-1}&\lesssim \|x\|^{\frac{n-1}{2}}\\
    |x_j|^{\frac{n+1}{2(j-1)}-1}&\lesssim \|x\|^{\frac{n-1-2j}{2}}\\
    (\aj)^{\frac{2n}{n+1}-1}&\lesssim \|x\|^{\frac{n-1}{2}},
\end{align*}
then, arguing as in the proof of Lemma \ref{lemma.N,carnot}, we get
$$|\subg \bar{N}(x)| \leq C_{n}^{1} \frac{\|x\|^{n-1}}{\tn^{n-1}(x)},\quad \forall x\in \mathbb{G}'_{n+1},$$
that is \eqref{est.subg,carnot} for $\bar{N}$. Following similar steps one can prove \eqref{est.subl,carnot} for $\bar{N}$, and thus the desired result for $\bar{N}$. Observe finally that one can follow the proof of the main theorem below and get the $q$-Poncaré inequality on $\mathbb{G}_{n+1}$ with $\bar{N}$ in place of $\tilde{N}$.
\end{remark}

Any filiform Carnot group $\g$, $n \geq 3$, of $n$-step can be equipped with the following probability measure
\begin{equation}
    \label{prob.measure.carnot}
    \mu_p(dx):=\frac{e^{-a\tn^p(x)}}{Z}dx\,,
\end{equation}
where $p \in (1, \infty)$, $a>0$, $dx$ is the Lebesgue measure on $\mathbb{R}^{n+1}$, and $Z=\int e^{-a\tn^p(x)}\,dx$ is the normalisation constant, 
\begin{theorem}\label{thm.carnot} 
\label{Theorem 4.2}
Let $\g$, $n \geq 3$ be a filiform Carnot group of $n$-step as above. If $p \geq n$, then the measure $\mu_{p}$ as in \eqref{prob.measure.carnot} satisfies a $q$-Poincar{\'e} inequality, i.e., there exists a constant $c_0$ such that 
\begin{equation}
\label{thm.carnot.stat}
\mu_{p} (|f-\mu_{p}f|^{q}) \leq c_0 \mu_{p}(|\subg f|^{q})\,,
\end{equation}
 where $\frac{1}{p}+\frac{1}{q}=1$, for all functions $f$ for which the right hand side is finite.
\end{theorem}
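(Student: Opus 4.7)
The proof will follow the blueprint of Theorem \ref{thm.engel}, adapted to the filiform $(n+1)$-dimensional setting with all Engel-specific quantities replaced by their $n$-step analogues. There are three stages: a $U$-bound on $\g$, a three-region decomposition of $\mu_p(|f-m|^q)$, and a combination step in which the free parameters are taken large.

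\emph{Stage 1 ($U$-bound).} I would prove
\[
\mu_p\bigl(f^q\, \tn^{p-n} |x_2|^n\bigr) \leq C\,\mu_p\bigl(|\subg f|^q\bigr) + D\,\mu_p\bigl(|f|^q\bigr)
\]
by fixing a connected component $C_j$ of $\gp$ (on which $\tn$ is smooth by Lemma \ref{lemma.N,carnot}), testing the identity $e^{-a\tn^p}\subg f = \subg(f e^{-a\tn^p}) + ap f \tn^{p-1}(\subg\tn)e^{-a\tn^p}$ against the vector field $\frac{\tn^{n-1}}{\|x\|^{n-1}}\subg\tn$ (the natural analog of $\tfrac{N^2}{\|x\|^2}\nabla_{\mathcal{B}_4}N$), integrating over $C_j$ with respect to $\mu_p$, integrating by parts, and invoking the pointwise bounds \eqref{est.subg,carnot}--\eqref{est.subl,carnot}. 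This would yield $\mu_p(f\tn^{p-n}|x_2|^{n-1}\one_{C_j}) \lesssim \mu_p(|\subg f|\one_{C_j}) + \mu_p(f\one_{C_j})$, the power $|x_2|^{n-1}$ arising from a lower bound of the form $|\subg\tn|^2 \geq |X_2\tn|^2 \gtrsim \|x\|^{n-1}|x_2|^{n-1}/\tn^{2(n-1)}$ extracted from the dominant $\partial_{x_2}$-contribution in $X_2\tn$. Replacing $f$ first by $f|x_2|$ and then by $f^q$, applying Young's inequality with conjugate exponents $(p,q)$, and using $|x_2|^p \leq |x_2|^n\tn^{p-n}$ valid for $p\geq n$, gives the $U$-bound on each $C_j$; smooth approximation on the Euclidean-compact sets $B_j$ and summation over the finite index set $\mathcal J$ then produces the global form.

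\emph{Stage 2 (decomposition and the first two regions).} With $\bar f = f - m$ and using $\mu_p(|f - \mu_p f|^q) \leq 2^q\,\mu_p(|\bar f|^q)$ exactly as in \eqref{observ-f-m}, I split
\[
\mu_p(|\bar f|^q) = I_1 + I_2 + I_3,
\]
where $I_1 = \mu_p(|\bar f|^q\,\one_{\{|x_2|^n\tn^{p-n}\geq R\}})$, $I_2 = \mu_p(|\bar f|^q\,\one_{\{|x_2|^n\tn^{p-n}\leq R\}\cap\{\tn\leq L\}})$, and $I_3 = \mu_p(|\bar f|^q\,\one_{\{|x_2|^n\tn^{p-n}\leq R\}\cap\{\tn\geq L\}})$, for parameters $R,L>1$ to be fixed at the end. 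The estimate $I_1 \leq R^{-1}\bigl(C\,\mu_p(|\subg f|^q) + D\,\mu_p(|\bar f|^q)\bigr)$ follows from Stage 1 applied to $\bar f$. For $I_2$, equivalence of homogeneous norms on $\g$ supplies constants $L_1,L_2$ with $\{\tn\leq L\} \subset B_{L_1} \subset \{\tn\leq L_2\}$; choosing $m$ to be the Lebesgue mean of $f$ over $B_{L_1}$ and applying Jerison's local Poincar\'e inequality (Theorem \ref{thm.jerison}), together with the pointwise comparison $dx \leq e^{aL_2^p}\,d\mu_p$ on $\{\tn\leq L_2\}$, produces $I_2 \leq P_0(L_1) e^{aL_2^p}\,\mu_p(|\subg f|^q)$, just as in the Engel argument leading to \eqref{second.case.thm.eng}.

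\emph{Stage 3 (translation trick for $I_3$ and conclusion).} On the support of $I_3$ the coordinate $x_2$ is forced small (since $|x_2|\leq R^{1/n}$), so Stage 1 does not apply directly. I translate by $h = (0,\,2R^{1/n},\,0,\dots,0)\in\g$: the explicit composition law gives $(h\circ x)_j = x_j$ for $j\neq 2$ and $(h\circ x)_2 = 2R^{1/n} + x_2$, so $|(h\circ x)_2|\geq R^{1/n}$ on the region, and a direct inspection of the definition of $\tn$ shows $\tn(h\circ x)\geq \tn(x)\geq L$ (each summand of $\|\cdot\|^n$ is monotone in the magnitude of its $x_2$-entry, and $x_{n+1}$ is unchanged). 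Using translation-invariance of Lebesgue measure together with the $U$-bound of Stage 1 applied to the shifted integrand yields $I_3 \leq (RL^{p-n})^{-1}\bigl(C_2\,\mu_p(|\subg f|^q) + D_2\,\mu_p(|\bar f|^q)\bigr)$. Summing the three estimates gives $\mu_p(|\bar f|^q) \leq A(R,L)\,\mu_p(|\subg f|^q) + B(R,L)\,\mu_p(|\bar f|^q)$ with $B(R,L)\to 0$ as $R,L\to\infty$; choosing $R,L$ large, absorbing, and invoking \eqref{observ-f-m} concludes \eqref{thm.carnot.stat}.

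The main obstacle is the lower bound $|X_2\tn|^2\gtrsim \|x\|^{n-1}|x_2|^{n-1}/\tn^{2(n-1)}$ in Stage 1. In the Engel case ($n=3$) the vector field $X_2$ is just $\partial_{x_2}$, so $X_2 N$ is the explicit monomial $\|x\|x_2/N^2$ and the lower bound is an identity. For $n\geq 4$, however, $X_2 = \partial_{x_2} + \sum_{k=3}^{n+1}\frac{x_1^{k-2}}{(k-2)!}\partial_{x_k}$ contains $n-1$ contributions of differing homogeneous weights, and one must verify that the leading $\partial_{x_2}\|x\|^n$ term $\sim |x_2|^{(n-1)/2}\sum_j (\aj)^{(n-1)/(n+1)}$ dominates (or at least is not cancelled by) the remaining contributions on each connected component $C_j$ of $\gp$, where the signs of the coordinates are fixed. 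Once the fixed-sign structure on $C_j$ is exploited, the bound reduces to an elementary size comparison using $|x_1|,|x_j|^{1/(j-1)}\leq\|x\|$ and $(\aj)\geq |x_2|^{(n+1)/2}$, but the bookkeeping of cross-terms is substantially more delicate than in the $n=3$ case and is the technical heart of the proof.
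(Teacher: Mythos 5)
Your proposal has the right three-stage architecture (a $U$-bound with the weight $\tn^{p-n}\vertiii{\cdot}^n$, the three-region decomposition, and the translation trick for the far region with $\vertiii{\cdot}$ small), which is indeed exactly what the paper does. However, you have chosen the wrong coordinate: you work with $\vertiii{x}=|x_2|$ and try to extract the key pointwise lower bound for $|\subg\tn|$ from $X_2\tn$. In Section~4 the paper defines $\vertiii{x}:=|x_1|$ and extracts the lower bound from $X_1\tn$. This is not a cosmetic difference. In Section~3 the paper uses the \emph{right}-invariant frame, where $X_2=\partial_{x_2}$ is a coordinate derivative and the Engel norm is a clean function of $|x_2|$; but Theorem~\ref{thm.carnot} and Lemma~\ref{lemma.N,carnot} use the \emph{left}-invariant frame, where it is $X_1=\partial_{x_1}$ that is a single coordinate derivative while $X_2=\partial_{x_2}+x_1\partial_{x_3}+\cdots+\tfrac{x_1^{n-1}}{(n-1)!}\partial_{x_{n+1}}$ has $n$ terms. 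The paper's lower bound
\[
|\subg\tn(x)|\geq|X_1\tn(x)|=\frac{|x_1|^{\frac{n-1}{2}}\sum_{j=2}^n(\aj)^{\frac{n-1}{n+1}}}{\tn(x)^{n-1}}\geq C_n\frac{\|x\|^{\frac{n-1}{2}}|x_1|^{\frac{n-1}{2}}}{\tn(x)^{n-1}}
\]
is free of cancellation because $X_1(\|x\|^n+|x_{n+1}|)$ is a single sum of \emph{positive} terms times $\mathrm{sgn}(x_1)$, and the last inequality is just the $\ell^r$--$\ell^s$ comparison $\sum b_j^{(n-1)/2}\geq(\sum b_j^n)^{(n-1)/(2n)}$ with $b_j=(\aj)^{2/(n+1)}$.

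The step you flag as ``the technical heart of the proof'' — the putative pointwise bound $|X_2\tn|^2\gtrsim\|x\|^{n-1}|x_2|^{n-1}/\tn^{2(n-1)}$ for $n\geq 4$ — is not merely harder to verify, it is in general false. The $n$ contributions to $X_2(\|x\|^n+|x_{n+1}|)$ carry the signs $\mathrm{sgn}(x_2),\,\mathrm{sgn}(x_1^{j-2})\,\mathrm{sgn}(x_j)$ for $3\leq j\leq n$, and $\mathrm{sgn}(x_1^{n-1})\,\mathrm{sgn}(x_{n+1})$, and these may be of mixed sign on a single connected component of $\gp$; by the intermediate value theorem $X_2\tn$ can vanish on a hypersurface where both $\|x\|$ and $|x_2|$ are of order one, and in a neighbourhood of that hypersurface the proposed lower bound fails on a set of positive measure. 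This is precisely why the paper switches the distinguished coordinate to $x_1$ when it switches to the left-invariant frame; the remark at the end of Section~3 even flags the frame change. The fix to your proposal is straightforward: replace $|x_2|$ by $|x_1|$ throughout, use $X_1=\partial_{x_1}$ for the lower bound, and in Stage~3 translate on the right by $h=(2R^{1/n},0,\dots,0)$, which by the explicit composition law shifts only the $x_1$-coordinate so that the monotonicity $\tn(x\circ h)\geq\tn(x)$ and the bound $\vertiii{x\circ h}\geq R^{1/n}$ go through immediately. With that change, your Stages 1--3 line up with the paper's Lemma~\ref{lemma,carnot} and the proof of Theorem~\ref{thm.carnot}.
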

\begin{remark}
Similarly as for the Engel group, we emphasize  that with suitable growth of the logarithm of the density of the measure we can obtain the $q$-Poincar{\'e} inequality with $q < 2$ which is much stronger than and implies the ordinary Poincar{\'e} inequality in $L_2$, (see e.g. Proposition 2.3 in \cite{BZ05}). Thus, as a consequence, we get a spectral gap for the Dirichlet operator defined with our probability measures and hence an exponential convergence to equilibrium in $L^2$ for the corresponding semigroup. We also note that using the same perturbation techniques we can include a class of probability measures which besides $N^p$ contain terms with slower growth at infinity.
\end{remark}

For the proof of Theorem \ref{thm.carnot}, one needs, as for the case of the Engel group $\mathcal{B}_4$, a result of the following form. 
\begin{lemma}\label{lemma,carnot} 
Let $\g$, $n \geq 3$ be a filiform Carnot group of $n$-step as above, and let $p,q$ be as in Theorem \ref{thm.carnot}. Then, for the probability measure $\mu_p$ as in \eqref{prob.measure.carnot}, there exists positive constants $C,D$ such that 
\begin{equation}\label{lemma,carnot,stat}
\mu_p (|f|^q \tn^{p-n} \vertiii{\cdot}^n) \leq C \mu_{p} (|\subg f |^{q})+D \mu_{p} (|f|^{q})\,,
\end{equation}
for any suitable $f$, where for $x \in \g$, we define $\vertiii{x}:=|x_1|$.

\end{lemma}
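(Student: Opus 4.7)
The plan is to mirror closely the strategy of Lemma~\ref{lemma,engel}, with the role of $|x_2|$ now played by $\vertiii{x}=|x_1|$ and the multiplier $N^{2}/\|x\|^{2}\,\nabla_{\mathcal{B}_4}N$ replaced by the $n$-dependent choice $M:=\tn^{n-1}\|x\|^{-(n-1)}\,\subg\tn$. First I would split $\gp$ into its finitely many connected components $C_j$ and, for a nonnegative $f\in C_c^{\infty}(C_j)$, pair the Leibniz identity $e^{-a\tn^{p}}\subg f=\subg(fe^{-a\tn^{p}})+ap\,f\,\tn^{p-1}(\subg\tn)\,e^{-a\tn^{p}}$ with $M$ and integrate against $\mu_p$. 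Integration by parts on the first resulting term, together with a Cauchy--Schwarz bound on the left, gives
\[
\mu_p\!\left(\tfrac{\tn^{n-1}}{\|x\|^{n-1}}|\subg\tn||\subg f|\right)\ge -\mu_p\!\left(f\,\subg\!\cdot\! M\right)+ap\,\mu_p\!\left(f\,\tfrac{\tn^{p+n-2}}{\|x\|^{n-1}}|\subg\tn|^{2}\right).
\]

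The key new ingredient relative to the Engel case is a quantitative lower bound on $|\subg\tn|^{2}$ adapted to the filiform structure. From the explicit computation of $X_1\tn$ in the proof of Lemma~\ref{lemma.N,carnot} one reads off
\[
|X_1\tn|^{2}=\frac{|x_1|^{n-1}}{\tn^{2(n-1)}}\Bigl[\sum_{j=2}^{n}A_j^{(n-1)/(n+1)}\Bigr]^{2},\qquad A_j:=\aj.
\]
Setting $B_j:=A_j^{(n-1)/(n+1)}$ and $r:=2n/(n-1)\ge 1$, the superadditivity $(\sum B_j)^{r}\ge\sum B_j^{r}=\sum A_j^{2n/(n+1)}=\|x\|^{n}$ gives $\sum B_j\ge\|x\|^{(n-1)/2}$, and hence
\[
|\subg\tn|^{2}\ge|X_1\tn|^{2}\ge\frac{|x_1|^{n-1}\,\|x\|^{n-1}}{\tn^{2(n-1)}}.
\]
This is precisely the lower bound needed: paired with the upper estimate \eqref{est.subg,carnot}, the right hand side of the displayed identity is bounded below by $ap\,\mu_p(f\tn^{p-n}|x_1|^{n-1})$, while the factor $\tn^{n-1}|\subg\tn|/\|x\|^{n-1}$ on the left becomes uniformly bounded.

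Next I would control $\subg\!\cdot\! M$ by the product rule, inserting \eqref{est.subg,carnot}, \eqref{est.subl,carnot} and the elementary bound $|\subg\|x\||\le c$; the outcome is a pointwise estimate $|\subg\!\cdot\! M|\le C_n\bigl(\|x\|^{n-1}/\tn^{n}+\|x\|^{-1}\bigr)$. Substituting back and replacing $f$ by $f|x_1|$ kills the negative power of $\|x\|$, since $|\subg|x_1||=1$, $|x_1|/\|x\|\le 1$, and $|x_1|\|x\|^{n-1}/\tn^{n}\le\|x\|^{n}/\tn^{n}\le 1$. One then replaces $f$ by $f^{q}$ and applies Young's inequality in the form $q|x_1|f^{q-1}|\subg f|\le\epsilon^{-(q-1)}|\subg f|^{q}+(q/p)\epsilon|x_1|^{p}f^{q}$. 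The hypothesis $p\ge n$ enters crucially here through $|x_1|^{p}=|x_1|^{n}|x_1|^{p-n}\le|x_1|^{n}\tn^{p-n}$ (using $|x_1|\le\tn$), which is exactly what allows the Young-type remainder to be absorbed into the left hand side for $\epsilon$ small enough.

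To extend the bound from $f\in C_c^{\infty}(C_j)$ to general $f\in W^{1,q}(C_j)$, I would apply verbatim the mollification-plus-cut-off scheme used in the proof of Lemma~\ref{lemma,engel}; summing over the finitely many $C_j$ (whose complement carries zero $\mu_p$-measure) handles $f$ on all of $\g$, and the sign assumption is removed via $\subg|f|=\mathrm{sgn}(f)\,\subg f$. The main obstacle in this program is the subadditivity-based lower bound on $|\subg\tn|^{2}$: without the correct $\|x\|^{n-1}$ factor appearing there, no tractable choice of multiplier would produce the target weight $\tn^{p-n}|x_1|^{n}$ demanded by \eqref{lemma,carnot,stat}, and the scheme inherited from the Engel case would fail.
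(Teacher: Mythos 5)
Your argument is correct and follows essentially the same route as the paper's proof: the same multiplier $\tn^{n-1}\|x\|^{-(n-1)}\subg\tn$, the same lower bound $|X_1\tn|\ge c_n\,|x_1|^{(n-1)/2}\|x\|^{(n-1)/2}/\tn^{n-1}$ (which you justify via superadditivity where the paper merely asserts it), and the same divergence estimate and Young absorption using $p\ge n$. The only deviation is that you substitute $f|x_1|$ where the paper substitutes $f\|\cdot\|$ and then invokes $\|\cdot\|\ge|x_1|$; your choice is in fact the one that makes the subsequent Young step ($|x_1|^p\le|x_1|^n\tn^{p-n}$) close cleanly, exactly as in the Engel prototype, so this is a harmless (indeed tidier) variant rather than a gap.
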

Bellow we give the proof of Lemma \ref{lemma,carnot} omitting some details that are similar to the proof of the corresponding lemma in the setting of $\mathcal{B}_4$ (see Lemma \ref{lemma,engel}).
\begin{proof}
We write $\gp=\cup_{j \in \mathcal{J}}C_j$, where $C_j$ are the connected components of $\gp$. We then fix $j \in \mathcal{J}$, and consider $f \in C^{\infty}(C_j)$, $ f \geq 0$. An application of the Leibniz rule gives
\[
e^{-a\tn^p}(\subg f)=\subg (fe^{-a\tn^p})+apfN^{p-1}(\subg \tn)e^{-a\tn^p}\,,
\]
so that by taking the inner product of the above quantity with $\frac{\tn^{n-1}}{\|\cdot\|^{n-1}}\subg \tn$, and integrating over $C_j$ with respect to $\mu_p$, one gets 
\begin{eqnarray}
\label{thm.carnot,eq1}
\lefteqn{\int_{C_j} \frac{\tn^{n-1}(x)}{\|x\|^{n-1}} \subg \tn(x)  \cdot\subg f(x)e^{-a\tn^p(x)}\,dx}\nonumber\\
 & = & \int_{C_j} \frac{\tn^{n-1}(x)}{\|x\|^{n-1}} \subg \tn(x)\cdot \subg\left(f(x)e^{-a\tn^p(x)}\right)\,dx\nonumber\\
&+& ap \int_{C_j} f(x) \frac{\tn^{p+n-2}(x)}{\|x\|^{n-1}} |\subg \tn|^2e^{-a\tn^p(x)}\,dx\,.
\end{eqnarray}
Applying the Cauchy-Schwartz inequality on the inner product $\subg \tn(x) \cdot \subg f(x)$ on the left-hand side of \eqref{thm.carnot,eq1}, and integrating by parts on the first term of the right-hand side of \eqref{thm.carnot,eq1}, we get
\begin{eqnarray}\label{thm.carnot,eq2}
\lefteqn{ap C_{n}^{3}\int_{C_j} f(x)\tn^{p-n}(x)\vertiii{x}^{n-1} e^{-a\tn^p(x)}\,dx}\\
& \leq & C_{n}^{1}\int_{C_j}  |\subg f(x)| e^{-a\tn^p(x)}\,dx\nonumber\\
&+& \int_{C_j} f(x) \subg \cdot \left( \frac{\tn^{n-1}(x)}{\|x\|^{n-1}}\subg \tn(x) \right) e^{-a\tn^p(x)}\,dx\,.\nonumber\\
\end{eqnarray}
where we applied \eqref{est.subg,carnot} and the following lower bound derived from \eqref{estim.for.upper.carnot}
\[
|\subg \tn(x)|> |X_1 \tn(x)| \geq C_{n}^{1}\frac{\|x\|^{\frac{n-1}{2}}\vertiii{x}^{\frac{n-1}{2}}}{N^{n-1}}\,.
\] 
Since 
\begin{eqnarray}\label{est.small.carnot1}
|X_1\|x\||  \leq  C_n^3\left( \S \aj \right)^{\frac{2}{n+1}-1}|x_1|^{\frac{n+1}{2}-1}\leq C_{n}^{3}\,,
\end{eqnarray}
and
\begin{eqnarray}\label{est.small.carnot2}
|X_2\|x\|| \leq  C_{n}^{4}  \S \!\!\left(
|x_1|^{\frac{n+1}{2}}\!+\!|x_2|^{\frac{n+1}{2}}\!+\!|x_j|^{\frac{n+1}{2(j-1)}}\!\right)^{\!\!\frac{2}{n+1}-1}  \!\!|x_1|^{j-2}|x_j|^{\frac{n+1}{2(j-1)}-1}\!\! \leq\! C_{n}^{4}\,,
\end{eqnarray}
we have, by using \eqref{est.subg,carnot} and \eqref{est.subl,carnot}, that
\begin{eqnarray}
\label{thm.carnot.eq3}
\lefteqn{\subg \cdot \left( \frac{\tn^{n-1}(x)}{\|x\|^{n-1}}\subg \tn(x)\right)}\nonumber\\
& = & \frac{\tn^{n-1}(x)}{\|x\|^{n-1}}\subl \tn(x)+(n-1) \frac{\tn^{n-2}(x)}{\|x\|^{n-1}}|\subg \tn(x)|^2\nonumber\\
& +& (1-n)\frac{\tn^{n-1}(x)}{\|x\|^n}\subg \tn(x)\cdot \subg \|x\|\nonumber\\
& \leq & C_{n} \left( \frac{\|x\|^{n-1}}{\tn(x)^n}+\frac{1}{\|x\|}\right)\,,
\end{eqnarray}
where in the previous step we used the Cauchy-Schwartz inequality as follows
\[
(1-n) \frac{\tn^{n-1}(x)}{\|x\|^n}\subg \tn(x) \cdot \subg \|x\| \leq (n-1)  \frac{\tn^{n-1}(x)}{\|x\|^n} |\subg \tn(x)| |\subg \|x\||\,.
\]
Hence, by \eqref{thm.carnot.eq3}, estimate \eqref{thm.carnot,eq2} becomes
\begin{eqnarray*}
\label{thm.carnot,eq4}
apC_{n}^{3} \mu_{p}(f \tn^{p-n}\vertiii{\cdot}^{n-1} \one_{C_j}) & \leq & C_{n}^{1} \mu_{p}(|\subg  f| \one_{C_j})\nonumber\\
&+&C_{n}\mu_{p}\left( f \left(\frac{\|\cdot\|^{n-1}}{\tn^n}+\frac{1}{\|\cdot\|}\right) \one_{C_j}\right)\,,
\end{eqnarray*}
and, after replacing $f$ with $f\|\cdot\|$, and by using \eqref{est.small.carnot1}, \eqref{est.small.carnot2} and $\|\cdot\|\geq \vertiii{\cdot}$, we obtain 
\begin{equation}\label{thm.carnot,eq5}
apC_{n}^{3}\mu_{p}(f \tn^{p-n}\vertiii{\cdot}^{n} \one_{C_j}) \leq C_{n}^{1}\mu_{p} (|\subg f|\one_{C_j})+C_n \mu_{p}(f \one_{C_j})\,.
\end{equation}
Finally, if one replaces $f$ with $f^q$, where $q$ is the conjugate exponent of $p$, then \eqref{thm.carnot,eq5} together with an application of Young's inequality allow us to estimate further as
\[
\left(apC_{n}^{3}-C_{n}^{1} \frac{q}{p}\epsilon \right)\mu_{p}(f^q\tn^{p-n}\vertiii{\cdot}^n \one_{C_j}) \leq \frac{C_{n}^{1}}{\epsilon^{q-1}}\mu_{p}(|\subg f|^q \one_{C_j})+C_n \mu_{p}(f^q \one_{C_j})\,,
\]
where the last inequality holds true for every $\epsilon>0$. Therefore, after a suitable choice of $\epsilon$, \eqref{lemma,carnot,stat} holds true for smooth, non-negative $f$ with a compact support lying in some $C_j$, with
\[
C=\frac{C_{n}^{1}}{\epsilon^{q-1}(C_{n}^{3}ap-C_{n}^{1}\frac{q}{p}\epsilon)}\,,\quad \text{and}\quad D=\frac{C_{n}}{C_{n}^{3}p-C_{n}^{1}\frac{q}{p}\epsilon}\,.
\]
An approximation argument allows \eqref{lemma,carnot,stat} to be valid for any suitable $f$. The proof is complete.
\end{proof}
For the proof of Theorem \ref{thm.carnot} one argues similarly to Theorem \ref{thm.engel}.
\begin{proof}[Proof of Theorem \ref{thm.carnot}:]
Since $\mu_p|f-\mu_{p}f|^q \leq 2^q \mu_{p}|f-m|^q$, for any $m>0$, it is enough to prove an upper bound of the form \eqref{thm.carnot.stat} for each term of the below decomposition 
\begin{eqnarray}\label{threecases,carnot}
\mu_{p}|f-m|^q&=&\mu_{p} \left( |f-m|^q \one_{\{\vertiii{\cdot}^n \tn^{p-n}\geq R\}} \right)+\mu_{p} \left( |f-m|^q \one_{\{\vertiii{\cdot}^n \tn^{p-n}\leq R\}}\one_{\{\tn\leq L\}} \right)\nonumber\\
&+&\mu_{p} \left( |f-m|^q \one_{\{\vertiii{\cdot}^n \tn^{p-n}\leq R\}}\one_{\{\tn\geq L\}} \right)\,,
\end{eqnarray}
for some $R>0$, $L>1$. The first and second terms of \eqref{threecases,carnot} can  be treated by simply adapting the strategy followed in Theorem \ref{thm.engel} in the general setting considered here. For the third term of \eqref{threecases,carnot} one proceeds as follows. \\
\indent Set $\overline{f}=f-m$ and define the set 
\[
A_{L,R}:= \{ x \in \g : \vertiii{x}^n \leq R\,,\tn(x) \geq L \}\,.
\]
We fix $L>R$ and choose $L$ and $R$ later.

As in the proof of Theorem \ref{thm.engel} we consider a  horizontal curve $\gamma: [0,t]\rightarrow \g$  such that $\gamma(0)=0$, $|\Dot{\gamma}(s)|\leq 1$, $\vertiii{\gamma(t)\circ x}\geq R$, and there exists $C>0$ such that 
\begin{equation}
   R\leq N(\gamma(s)\circ x)\leq N(x),\quad \forall s\in[0,t].
\end{equation}
Then, since 
\[
\{ x \in \g : \vertiii{x}^n \tn^{p-n}(x) \leq R\,,\tn(x) \geq L\}\subset A_{L,R}\,,
\]
we can estimate as 
\begin{eqnarray*}
\label{3case,carnot}
\mu_{p} \left(|\overline{f}|^q \one_{\{\vertiii{\cdot}^n \tn^{p-n} \leq R\}}\one_{\{N \geq L\}} \right) & \leq & \int_{A_{L,R}} |\overline{f}(x)|^q\,d\nu_{p}(x)\nonumber\\
& \leq & \int_{A_{L,R}^{}} |\overline{f}(x)-\overline{f}(h\circ x)|^q\,d\nu_{p}(x) \nonumber\\
&+&\int_{A_{L,R}^{}} |\overline{f}(h\circ x)|^q\,d\nu_{p}( x),\,
\end{eqnarray*}
and bound the two terms in the last inequality as in the proof of Theorem \ref{thm.engel}.
In particular we will have
\begin{align}
\int_{A_{L,R}^{}} |\overline{f}(x)-\overline{f}(h\circ x)|^q\,d\nu_{p}(x)\leq C \nu_p(|\nabla_{\g}f|^q),\\
\end{align}
and, by choosing $h=(h_1,\ldots,h_n)$ such that $h_1\geq 2R^{\frac{1}{n}}$ so that $\vertiii{x \circ h} \geq R^{\frac{1}{n}} $, we get 
\begin{eqnarray}
\label{last,carnot}
\lefteqn{\int_{A_{L,R}^{'}}|\overline{f}(x \circ h)|^q\,d\mu_{p}(x \circ h)}\nonumber\\
& \leq & \frac{1}{RL^{p-n}}\int_{A_{L,R}^{'}} |\overline{f}(x \circ h)|^q \vertiii{x \circ h}^n \tn^{p-n}(x \circ h) d\mu_{p}(x \circ h)\nonumber\\
& \leq & \frac{C}{R^{p-n+1}}\mu_{p}|\subg f|^q+\frac{D}{R^{p-n+1}}\mu_{p}|f-m|^q\,
\end{eqnarray}
by Lemma \ref{lemma,carnot}. Inequality \eqref{last,carnot}, as well as the other terms of \eqref{threecases,carnot}, should now be handled as in the case of $\mathcal{B}_4$ (see Theorem \ref{thm.engel}), that is by choosing $R$ and $L$  big enough with $L>R$. This completes the proof of Theorem \ref{thm.carnot}.
\end{proof}

\begin{remark}
Note that the key point in the proof of Theorem \ref{thm.carnot} is Lemma \ref{lemma,carnot}, the latter following from Lemma \ref{lemma.N,carnot}. Hence, by Remark \ref{rem.gen.norm}, we get that Theorem \ref{thm.carnot} holds with $\tilde{N}$ replaced by $\bar{N}$.
\end{remark}

As in the case of the Engel group $\mathcal{B}_4$, one can extend the family of measures satisfying \eqref{thm.carnot.stat} in our general setting by using a perturbation technique of \cite{HZ10}. 
\begin{corollary}\label{cor,gen,carnot}
Let $p\geq 3$, $q$ be the conjugate exponent of $p$ (i.e. $\frac{1}{p}+\frac{1}{q}=1$),  and $\mu_p$ be as in \eqref{prob.measure.carnot}. 
Let also $d\mu_{w}=\tilde{Z}^{-1}e^{-W}d\mu_{p}$ be a probability measure, where $W$ is a differentiable potential  and $\tilde{Z}:=\int e^{-W}d\mu_p$ a normalization constant.
If there exist $0<\delta<\frac{1}{C(q)}$ and $\gamma_{\delta} \in (0,\infty)$ such that
\begin{equation}
\label{gen.meas,carnot}
  |\subg W|^{q}\leq \delta \tn^{p-n}\vertiii{\cdot}^n+\gamma_{\delta},
\end{equation}
then, the measure $\mu_{w}$ satisfies \eqref{lemma,carnot,stat}. Moreover, if there exists $\tilde{C}>0$ such that $W \leq \tilde{C}\tn$, then $\mu_{w}$ satisfies the $q$-Poincar{\'e} inequality.
\end{corollary}

\begin{proof}
To prove the first part of Corollary \ref{cor,gen,carnot} we proceed by plugging the function $f e^{-\frac{W}{q}}$ in the inequality \eqref{lemma,carnot,stat} to get 
\begin{equation}
\label{gen,stat,1,carnot}
   \mu_{p}\left( e^{-W}|f|^q \tn^{p-n}\vertiii{\cdot}^n \right) \leq C \mu_{p} \left(|\subg(e^{-\frac{W}{q}}f)|^q\right)+D \mu_{p} \left( |e^{-\frac{W}{q}}f|^q \right) \,,
\end{equation}
where 
\begin{eqnarray}
\label{gen,stat,2,carnot}
|\subg(e^{-\frac{W}{q}}f)|^q & = & \left|\left(\subg e^{-\frac{W}{q}}\right)|f|+e^{-\frac{W}{q}}\subg|f|\right|^q\nonumber\\
& \leq & \left( \frac{|\subg W|}{q}|e^{-\frac{W}{q}}f|+e^{-\frac{W}{q}}|\subg f|\right)^q\nonumber\\
& \leq & C(q) \left(|\subg W|^q e^{-W}|f|^q+e^{-W}|\subg f|^q\right)\,.
\end{eqnarray}
Inserting \eqref{gen,stat,2,carnot} into \eqref{gen,stat,1,carnot} and using \eqref{gen.meas,carnot} we arrive at
\begin{eqnarray*}
\mu_{w} \left(|f|^q \tn^{p-n}\vertiii{\cdot}^n \right) & \leq & CC(q) \mu_{w} \left( |\subg W|^q|f|^q\right) +CC(q) \mu_{w} (|\subg f|^q)+D\mu_{w} (|f|^q)\nonumber\\
& \leq & \delta CC(q) \mu_{w} (\tn^{p-n}\vertiii{\cdot}^n|f|^q)+\gamma_{\delta} CC(q)\mu_{w} (|f|^q)\nonumber\\
&+& CC(q) \mu_{w} (|\subg f|^q)+D\mu_{w} (|f|^q)\,,
\end{eqnarray*}
and this proves our first claim provided that $1-\delta CC(q)>0$, that is  for $\delta< C(q)^{-1}$, with $C(q)$ being a new suitable constant depending on $q$.\\
\indent Now to prove the $q$-Poincar{\'e} for the generalised measure, we decompose $\mu_{w}(|f-m|^q)$ as in \eqref{threecases,carnot}, i.e., we write
\begin{eqnarray}\label{threecases,gen,carnot}
\mu_{w}|f-m|^q&=&\mu_{w} \left( |f-m|^q \one_{\{\vertiii{\cdot}^n \tn^{p-n}\geq R\}} \right)+\mu_{w} \left( |f-m|^q \one_{\{\vertiii{\cdot}^n \tn^{p-n}\leq R\}}\one_{\{\tn\leq L\}} \right)\nonumber\\
&+&\mu_{w} \left( |f-m|^q \one_{\{\vertiii{\cdot}^n \tn^{p-n}\leq R\}}\one_{\{\tn\geq L\}} \right)\,,
\end{eqnarray}
for some $R>0$ and $L>1$. Notice that for the first and third terms of \eqref{threecases,gen,carnot} one can proceed as in Theorem \ref{thm.carnot}. For the second term of \eqref{threecases,gen,carnot},
arguing as in Theorem \ref{thm.engel} we conclude that
\begin{eqnarray*}
\mu_{w}(|f-m|^q \one_{\{\vertiii{\cdot}^n \tn^{p-n} \leq R\}}\one_{\{\tn \leq L\}})& \leq & \frac{P_0(L_1)}{\tilde{Z}}\int_{\tn \leq L_2}|\subg f|^q\,dx\\
& \leq & \frac{P_0(L_1)}{\tilde{Z}}e^{aL_{2}^{p}+\tilde{C}L_2}\mu_{w} (|\subg f|^q)\,,
\end{eqnarray*}
since $W \leq \tilde{C}\tn \leq \tilde{C}L_2$ in $\{\tn \leq L_2\}$. This finally shows the second part of Corollary \ref{cor,gen,carnot} and completes the proof.
\end{proof}

\begin{remark}
As for the particular case of the Engel group (see Remark \ref{rmk.Engel}), in this more general setting we have, once again, that it is not possible to pass from the $q$-Poincar\'{e} inequality for $\mu_p$ to the same inequality for any measure $\mu'_p$ equivalent to $\mu_p$. Therefore, as explained in Remark \ref{rmk.Engel}, the equivalence of the measures is not enough to pass the property from one measure to the other.
\end{remark}

\end{document}